%
%
%

%

\documentclass[11pt]{article}
\usepackage{pb-diagram}
\usepackage{amsmath,amsthm,amsfonts,amssymb,latexsym}
\usepackage{amsmath}

\usepackage{a4wide}


\vfuzz2pt 
\hfuzz2pt 

\newtheorem{thm}{Theorem}[section]

\newtheorem{lem}[thm]{Lemma}
\newtheorem{prop}[thm]{Proposition}

\newtheorem{rmk}[thm]{Remark}

\numberwithin{equation}{section}





\newcommand{\Ker}{\mathrm{Ker}}

\frenchspacing


\begin{document}


\title{On the Riemannian geometry of tangent Poisson-Lie group}

\author{Foued Aloui\\
E.S.S.T.H.Sousse.\\ University
of Sousse\\
Email:\\
foued$_-$aloui@yahoo.fr
 \\
 }
 \date{}

 \date{}
\maketitle
\begin{abstract}
Let  $(G,\Pi_{G},\tilde{g})$ be a Poisson-Lie group equipped with a left invariant pseudo-Riemannian metric $\tilde{g}$ and let $(TG,\Pi_{TG},\tilde{g}^{c})$ be the Sanchez de Alvarez  tangent Poisson-Lie group of $G$ equipped with the left invariant pseudo-Riemannian metric $\tilde{g}^{c}$, complete lift of $\tilde{g}$. In this paper, we express  respectively  the Levi-Civita connection, curvature and metacurvature of  $(TG,\Pi_{TG},\tilde{g}^{c})$  in terms of the Levi-Civita connection, curvature and metacurvature  of the basis   Poisson-Lie group $(G,\Pi_{G},\tilde{g})$ and we prove that the space of differential forms $\Omega^{*}(G)$ on $G$ is a differential graded Poisson algebra if, and only if,  $\Omega^{*}(TG)$  is a differential graded Poisson algebra . Moreover, we prove that the triplet $(G,\Pi_{G},\tilde{g})$ is a pseudo-Riemannian Poisson-Lie group if, and only if, $(TG,\Pi_{TG},\tilde{g}^{c})$  is also a pseudo-Riemannian Poisson-Lie group and we give an example of 6-dimensional pseudo-Riemannian Sanchez de Avarez tangent Poisson-Lie group.
\\~~~~~~~~~\\
 Key words : Poisson Geometry. Riemannian Geometry. Lie group and Lie algebra. \\
 A.M.S. Subject classification : 53D17. 58B20. 70G65.\\
\end{abstract}
\bigskip
\thispagestyle{empty}
\section{Introduction}
Riemannian geometry of tangent bundles of smooth manifolds is an important area in mathematics and physics which has begun by Sasaki \cite{sa}. He used a Riemannian manifold $(M,\tilde{g})$ and constructed a Riemannian metric on $TM$ defined by the horizontal and vertical lifts.\\
Another way of prolonging the tensor fields (Riemannian and pseudo-Riemannian metrics)
and affine connections on the tangent bundle of a manifold M is the use of complete
and vertical lifts.
In \cite{yako}, Yano and Kobayachi used the complete and vertical lifts to construct a pseudo-Riemannian metric on the tangent bundle $TM$ by a pseudo-Riemannian metric on the base manifold $M.$ In fact, if 
$(M,\tilde{g})$ is a pseudo-Riemannian metric then the complete lift of $\tilde{g}$ is a pseudo-Riemannian metric on $TM$ denoted by $\tilde{g}^{c}$, given for any vector field $X,Y \in \chi(M)$ by:
\begin{equation*} 
\begin{array}{rcl}
 \tilde{g}^{c}(X^{v}, Y^{v})  &=& 0,\\
\tilde{g}^{c}(X^{c}, Y^{v}) &=& (\tilde{g}(X, Y))^{v} \\
\tilde{g}^{c}(X^{c}, Y^{c}) &=& (\tilde{g}(X, Y))^{c},
\end{array}
\end{equation*}
where $X^{v}$ is the vertical lift of $X$ and $X^{c}$ is the complete lift of $X$ (for more details see \cite{ML-PR}\cite{Yano}).\\~~~~\\
Moreover, Poisson manifolds play a fundamental role in Hamiltonian dynamics, where they serve as a phase space. A Poisson bracket on a manifold $M$ is a Lie bracket $\{ , \}_{M}$ on the space of differentiable functions $\mathcal{C}^{\infty}(M)$ satisfying the Leibniz identity,
\begin{equation*}
\{f,gh\}_{M} = \{f,g\}_{M}h + g\{f,h\}_{M}.
\end{equation*}
From the Leibniz identity there exists a bivector field $\Pi_{M} \in \wedge^{2}TM$, called Poisson tensor such that
\begin{equation} \label{t}
\{f,g\}_{M} = \Pi_{M}(df,dg).
\end{equation}
A Poisson manifold is a manifold equipped with a Poisson bracket.\\~~~~~~\\
An important class of Poisson  manifolds equipped with pseudo-Riemannian metrics is the family of Poisson-Lie groups equipped with left invariant pseudo-Riemannian metrics.\\
The notion of Poisson-Lie group  was first introduced by Drinfel'd \cite{D}\cite{Dr} and Semenov-Tian- Shansky \cite{Seme}.  Semenov,  Kosmann-Schwarzbach and Magri \cite{ko} used Poisson-Lie
groups to understand the Hamiltonian structure of the group of dressing transformations
of certain integrable systems. These Poisson-Lie groups play the role of
symmetry groups.\\~~~~~~~\\
 In \cite{M.N}, M.Boumaiza and N.Zaalani showed that  if $(G,\Pi_{G})$ is a Poisson-Lie group then the tangent bundle $(TG,\Pi_{TG})$ of $G$ with its tangent Poisson structure $\Pi_{TG}$ defined in the sense of Sanchez de Alvarez \cite{sa de al} is a Poisson Lie group. This Poisson-Lie group $(TG,\Pi_{TG})$ is called Sanchez de Alvarez tangent Poisson-Lie group of $G$ \cite{al-za}.\\~~~~~\\
In  \cite{al-za} the author and N.Zaalani study the Riemannian geometry and the compatibility of the Sanchez de Alvarez tangent Poisson-Lie group  $(TG,\Pi_{TG})$ of $G$ equipped with the natural left invariant Riemannian metric (or the left invariant Sasaki metric, or the left invariant Cheeger-Gromoll metric) in terms of a Poisson-Lie group equipped with a left invariant Riemannian metric. The non-compatibility  between the Sanchez de Alvarez Poisson structure  and the natural left invariant Riemannian  metric (except in the trivial case $\Pi_{G} = 0$) on $TG$ \cite{al-za} this pushed us to look for another Riemannian metric on the tangent bundle $TG$ for which there is  a compatibility with the Sanchez de Alvarez Poisson structure. For this reason we are interested in this paper  by the complete Riemannian metric on $TG$. Then, we consider the left invariant pseudo-Riemannian metric $\tilde{g}^{c}$ on $TG$, complete lift of that of $G$ and we study the geometry of the triple $(TG,\Pi_{TG},\tilde{g}^{c})$ and its relations with the geometry of $(G,\Pi_{G},\tilde{g})$.\\~~~~~~\\
This paper is organized as follows: In section 2, we recall basic definitions and facts about contravariant connections, curvatures, metacurvatures, generalized Poisson brackets, pseudo-Riemannian Poisson-Lie group. In section 3, we express respectively  the Levi-Civita connection, curvature, metacurvature and the generalized Poisson brackets of $(TG,\Pi_{TG},\tilde{g}^{c})$ in terms of the Levi-Civita connection, curvature, metacurvature and the generalized Poisson brackets of $(G,\Pi_{G},\tilde{g})$ respectively and in section 4, we show that $(G,\Pi_{G},\tilde{g})$ is a pseudo-Riemannian Poisson-Lie group if, and only if, $(TG,\Pi_{TM},\tilde{g}^{c})$ is a pseudo-Riemannian Poisson-Lie group. In section 5, we give an example of 6-dimensional pseudo-Riemannian Sanchez de Avarez tangent Poisson-Lie group.
\section{Preliminaries}
\subsection{Contravariant connections and curvatures}
 Contravariant connections on Poisson manifolds were defined by Vaisman \cite{Va} and studied in details by Fernandes \cite{Fe}. This notions appears extensively in the context of noncommutative deformations \cite{Ha,Haw}.\\
Let $(M,\Pi_{M})$ be a Poisson manifold. We associate with the Poisson tensor $\Pi_{M}$ the anchor map $\Pi^{\sharp}_{M} :  T^{*}M \rightarrow TM$  defined by $\beta(\Pi^{\sharp}_{M}(\alpha)) = \Pi_{M}(\alpha,\beta) $ and the  Koszul bracket $[ , ]_{M}$ on the space of differential 1-forms $\Omega^{1}(M)$ given by:
\begin{equation*} \label{koz}
\begin{array}{rcl}
[\alpha , \beta]_{M} &=& \mathcal{L}_{\Pi^{\sharp}_{M}(\alpha)}\beta - \mathcal{L}_{\Pi^{\sharp}_{M}(\beta)}\alpha - d(\Pi_{M}(\alpha,\beta)).
\end{array}
\end{equation*}
A contravariant connection on $M,$ with respect to $\Pi_{M}$, is a $\mathbb{R}$-bilinear map
\begin{eqnarray*}
\mathcal{D}^{M} : \Omega^{1}(M))   \times  \Omega^{1}(M) & \to & \Omega^{1}(M) \nonumber \\
(\alpha,\beta) & \mapsto & \mathcal{D}^{M}_{\alpha}\beta
\end{eqnarray*}
such that for all $f \in \mathcal{C}^{\infty}(M)$\\
\begin{equation*}
\mathcal{D}^{M}_{f\alpha}\beta = f \mathcal{D}^{M}_{\alpha}\beta \hspace{2mm} \hbox{and} \hspace{2mm}
 \mathcal{D}^{M}_{\alpha}(f\beta) = f \mathcal{D}^{M}_{\alpha}\beta + \Pi^{\sharp}_{M}(\alpha)(f)\beta.
 \end{equation*}
 The torsion $\mathcal{T}^{M}$ and the curvature $\mathcal{R}^{M}$ of a contravariant connection $\mathcal{D}^{M}$  are formally
identical to the usual ones:
\begin{equation*}
\mathcal{T}^{M}(\alpha,\beta) = \mathcal{D}^{M}_{\alpha}\beta - \mathcal{D}^{M}_{\beta}\alpha - [\alpha,\beta]_{M}
\end{equation*}
\begin{equation} \label{curva}
 \mathcal{R}^{M}(\alpha,\beta)\gamma = \mathcal{D}^{M}_{\alpha}\mathcal{D}^{M}_{\beta}\gamma -
\mathcal{D}^{M}_{\beta}\mathcal{D}^{M}_{\alpha}\gamma -
\mathcal{D}^{M}_{[\alpha,\beta]_{M}}\gamma.
\end{equation}
These are respectively (2,1) and (3,1)-type tensor fields. When $\mathcal{T}^{M} \equiv 0$ (resp.
$\mathcal{R}^{M} \equiv 0$), $\mathcal{D}^{M}$ is called torsion-free (resp. flat).\\
Now let $(M,\tilde{g})$ be a pseudo-Riemannian manifold. The metric $\tilde{g}$ defines the musical isomorphisms
\begin{eqnarray*}
\sharp : TM & \to & T^{*}M \nonumber \\
X & \mapsto & \tilde{g}(X,.)
\end{eqnarray*}
 and its inverse $\sharp^{-1}$. We define the contravariant pseudo-Riemannian metric  $g$ associated to $\tilde{g}$ defined by:
$$g(\alpha, \beta) = \tilde{g}(\sharp^{-1}(\alpha), \sharp^{-1}(\beta)).$$
For each contravariant Riemannian metric $g$, there exists a unique contravariant connection $\mathcal{D}^{M}$ associated with
$(\Pi_{M},g)$ such that $\mathcal{D}^{M}$ is torsion-free, i.e.,
\begin{equation*} \label{torsion}
[\alpha,\beta]_{M} = \mathcal{D}^{M}_{\alpha}\beta -  \mathcal{D}^{M}_{\beta}\alpha,
\end{equation*}
and the metric $g$ is parallel with respect to $\mathcal{D}^{M}$, i.e.,
\begin{equation*} \label{comp}
\Pi^{\sharp}_{M}(\alpha) g(\beta, \gamma) =  g(\mathcal{D}^{M}_{\alpha}\beta, \gamma) + g(\beta, \mathcal{D}^{M}_{\alpha}\gamma).
\end{equation*}
The connection $\mathcal{D}^{M}$ is called the Levi-Civita  contravariant connection  associated with  $(\Pi_{M},g)$. It has appeared first in \cite{Bou} and  defined by the Koszul formula:
\begin{equation} \label{Ko}
\begin{array}{rcl}
 2 g(\mathcal{D}^{M}_{\alpha}\beta, \gamma)  &=& \Pi^{\sharp}_{M} (\alpha)g(\beta, \gamma)  + \Pi^{\sharp}_{M} (\beta)g(\alpha, \gamma) - \Pi^{\sharp}_{M}(\gamma)g(\alpha, \beta) \\ &\quad \quad & + \quad 
 g([\alpha, \beta]_{M}, \gamma)  +  g([\gamma, \alpha]_{M}, \beta)  +  g([\gamma, \beta]_{M}, \alpha).
\end{array}
\end{equation}
We say that  $\mathcal{D}^{M}$ is locally symmetric if $\mathcal{D}^{M}\mathcal{R}^{M} = 0,$ i.e.,  if for any $\alpha,\beta,\gamma,\delta \in \Omega^{1}(M),$ we have:
\begin{equation} \label{local}
(\mathcal{D}^{M}_{\alpha}\mathcal{R}^{M})(\beta,\gamma)\delta := \mathcal{D}^{M}_{\alpha}(\mathcal{R}^{M}(\beta,\gamma)\delta) - \mathcal{R}^{M}(\mathcal{D}^{M}_{\alpha}\beta,\gamma)\delta - \mathcal{R}^{M}(\beta,\gamma)\mathcal{D}^{M}_{\alpha}\delta  - \mathcal{R}^{M}(\beta,\mathcal{D}^{M}_{\alpha}\gamma)\delta = 0.
\end{equation}
\subsection{Generalized Poisson bracket}
Let $(M,\Pi_{M})$ be a Poisson manifold and $\mathcal{D}^{M}$ a torsion-free and flat connection with respect to $\Pi_{M}$. In \cite{Haw}, E.Hawkins showed that such a connection
defines an $\mathbb{R}$-bilinear bracket on the space of differential forms
$\Omega^{*}(M)$ denoted also by  $\{ , \}_{M}$, such that:
\begin{enumerate}
  \item The bracket $\{ , \}_{M}$ is  antisymmetric, i.e.,
  \begin{equation*}
  \{ \sigma, \upsilon \}_{M} = -(-1)^{\deg(\sigma) \deg(\upsilon)} \{\upsilon , \sigma\}_{M}.
  \end{equation*}
  \item  $\{ , \}_{M}$, satisfies the product rule, i.e.,
  \begin{equation*}
  \{\sigma , \upsilon \wedge \nu\}_{M} = \{ \sigma, \upsilon \}_{M} \wedge \nu + (-1)^{\deg(\sigma) \deg(\upsilon)} \upsilon \wedge \{\sigma , \nu\}_{M}.
  \end{equation*}
  \item The exterior differential $d$ is a derivation with respect to $\{ , \}_{M}$, i.e.,
  \begin{equation*}
  d\{ \sigma,\upsilon \}_{M} = \{d\sigma ,\upsilon \}_{M} + (-1)^{\deg(\sigma)}\{ \sigma, d\upsilon\}_{M}.
  \end{equation*}
  \item For any $f, g \in \mathcal{C^{\infty}}(M)$ and for any $\sigma \in \Omega^{*}(M)$, the bracket $\{f,g\}_{M}$ coincides  with the initial Poisson bracket on  $M$ and
      \begin{center}
      $\{f , \sigma\}_{M} = \mathcal{D}^{M}_{df}\sigma$.
      \end{center}
      \end{enumerate}
      This bracket is given for any $\alpha,\beta \in \Omega^{1}(M)$ by \cite{Bah},
       \begin{equation} \label{bracket}
     \{\alpha,\beta\}_{M} = -\mathcal{D}^{M}_{\alpha}d\beta -\mathcal{D}^{M}_{\beta}d\alpha + d \mathcal{D}^{M}_{\beta}\alpha + [\alpha,d\beta]_{M},
    \end{equation}
    where $[ , ]_{M}$ is the generalized Koszul bracket on $\Omega^{*}(M)$ satisfying the Leibnuz identity, i.e.,
  \begin{equation} \label{id}
  [\sigma , \upsilon \wedge \nu]_{M} = [\sigma, \upsilon ]_{M} \wedge \nu + (-1)^{(\deg(\sigma) -1) \deg(\upsilon)} \upsilon \wedge [\sigma , \nu]_{M}.
  \end{equation}
   Note that the generalized Koszul bracket for the forms differential is the analogy of the Schouten Nijenhuis bracket for the multivectors fields.\\
       We call this bracket $\{ , \}_{M}$ a generalized pre-Poisson
bracket associated with the contravariant connection $\mathcal{D}^{M}.$
      E.Hawkins  showed that there
exists a (2,3) tensor $\mathcal{M}^{M}$ symmetric in the contravariant indices
and antisymmetric in the covariant indices such that the generalized pre-Poisson bracket satisfies the graded Jacobi identity,
\begin{equation*}
\{\sigma,\{\upsilon,\nu\}_{M}\}_{M} - \{\{\sigma,\upsilon\}_{M},\nu\}_{M} - (-1)^{\deg(\sigma)\deg(\upsilon)} \{\upsilon,\{\sigma,\nu\}_{M}\}_{M} = 0,
\end{equation*}
 if, and only if,
      $\mathcal{M}^{M}$ is identically zero.\\ $\mathcal{M}^{M}$ is called metacurvature of $\mathcal{D}^{M}$ and given by
      \begin{equation} \label{equa2.3}
      \mathcal{M}^{M}(df,\alpha,\beta) = \{ f, \{ \alpha, \beta\}_{M}\}_{M} - \{\{f ,\alpha \}_{M} , \beta\}_{M} - \{ \{f , \beta\}_{M}, \alpha\}_{M}.
      \end{equation}
      If $\mathcal{M}^{M}$ vanishes identically, the contravariant connection $\mathcal{D}^{M}$ is called metaflat and  the
bracket $\{ , \}_{M}$ is called the generalized Poisson bracket associated with $\mathcal{D}^{M}$, making $\Omega^{*}(M)$ a differential graded Poisson algebra (for more details see \cite{Haw}).
\subsection{Vertical and complete lifts of tensor fields to the tangent bundle}
In this subsection we recall the vertical and complete lifts of tensor fields in the sense of \cite{Yano}  from a manifold $M$ to its tangent bundle $TM$.\\
If $\alpha$ is a 1-form sur $M$, it is regarded, in a natural way, as a function sur $TM,$  which we denote by $\iota\alpha.$
       For any $f \in   \mathcal{C^{\infty}}(M)$, the vertical lift (resp. the complete lift) of  $f$ sur $M$ to the tangent bundle $TM$ is the function
         $f^{v}$ (resp. $f^{c}$)  defined by
        \begin{equation*}
        f^{v} = f \circ \pi \hspace{2mm} (\hbox{resp}. \hspace{2mm} f^{c} = \iota(df)),
        \end{equation*}
        where $\pi : TM \rightarrow M$ is the natural projection and $df$ the tangent map of $f$.\\
         For any vector field $X \in \chi(M)$, the vertical lift (resp. the complete lift) of $X$ sur $M$ to $TM$ is the  vector field $X^{v}$ (resp. $X^{c}$) defined by,
   \begin{equation} \label{vert}
    X^{v}(\iota\alpha) = (\alpha(X))^{v},\hspace{2mm} (\hbox{resp}.\hspace{1mm} X^{c}(f^{c}) = (X(f))^{c},
    \end{equation}
    where $\alpha \in \Omega^{1}(M)$ and $f \in   \mathcal{C^{\infty}}(M)$.\\
    For $(x,X) \in TM$, the Kernel of $d\pi : TTM \rightarrow TM$ at point $(x,X),$
    $$\Ker(d\pi(x,X)) = V_{(x,X)},$$
    is the vertical subspace of $T_{(x,X)}TM.$\\~~~\\
     For any $X \in \chi(M) $ and $f \in   \mathcal{C^{\infty}}(M)$ we have:
        \begin{equation} \label{eqs}
      (f + g)^{v} = f^{v} + g^{v}, \hspace{1mm} (f + g)^{c} = f^{c} + g^{c}, \hspace{1mm}  X^{v}f^{v} = 0,\hspace{1mm} X^{v}f^{c} = X^{c}f^{v} = (Xf)^{v} \hspace{1mm} \hbox{and} \hspace{2mm} X^{c}f^{c} = (Xf)^{c}.
        \end{equation}  
          For any 1-form $\alpha \in \Omega^{1}(M),$ we denote by $\alpha^{v} \in \Omega^{1}(TM)$ (resp.  $\alpha^{c} \in \Omega^{1}(TM)$) the vertical lift (resp. the complete lift) of $\alpha.$ For any 1-forms $\alpha,\beta \in \Omega^{1}(M)$ we have:
          \begin{equation}
          (\alpha + \beta)^{v} = \alpha^{v} + \beta^{v} \hspace{2mm} \hbox{and} \hspace{2mm} (\alpha + \beta)^{c} = \alpha^{c} + \beta^{c}.
          \end{equation}
         If we denote by $(x_{i})$ $(i=1,....,n)$ local coordinates on $M$ and if $\alpha = \alpha_{i}dx_{i},$ then $\alpha^{v} = \alpha_{i}dx_{i}$. Thus $\alpha^{v}$ is precisely the pull back of $\alpha$ to $TM,$ that is $$\alpha^{v} = \pi^{*}(\alpha),$$
         where $\pi^{*}: T^{*}M \rightarrow T^{*}TM$.\\
          For any differential forms $\mu$ and $\nu$ we have:
          \begin{equation} \label{id1}
          (\mu \wedge \nu)^{c} = \mu^{c} \wedge \nu^{v} + \mu^{v} \wedge \nu^{c}, \hspace{1mm} (\mu \wedge \nu)^{v} = \mu^{v} \wedge \nu^{v}, \hspace{1mm} d(\mu^{c}) = (d\mu)^{c}, \hspace{1mm} \hbox{and} \hspace{1mm} d(\mu^{v}) = (d\mu)^{v}.
          \end{equation}
\subsection{Pseudo-Riemannian Poisson-Lie group}
A Lie group $G$ is called a Poisson-Lie group if it is also a Poisson manifold such that the product $$ m :G \times G \rightarrow G : (x,y) \mapsto xy$$ is a Poisson map, where $G \times G$ is equipped with the product Poisson structure.\\
      Let $G$ be a Poisson Lie group with Lie algebra $(\mathfrak{g},[ , ]_{\mathfrak{g}})$ and $\Pi_{G}$ the Poisson tensor
on $G$. Pulling $\Pi_{G}$ back to the identity element $e$ of $G$ by the left translations, we get a map $\Pi_{G}^{l} : G \rightarrow \mathfrak{g} \wedge \mathfrak{g}$, defined by  $\Pi^{l}_G(x) =  (L_{x^{-1}})_{*}\Pi_G(x),$ where $(L_{x})_{*}$ denotes the tangent map of the left translation $L_{x}$ of $G$ by $x$. The intrinsic derivative $$\xi := d_{e}\Pi^{l}_{G} : \mathfrak{g} \rightarrow \mathfrak{g} \wedge \mathfrak{g}$$ of $\Pi^{l}_{G}$ at $e$ is a 1-cocycle relative to the adjoint representation
of $\mathfrak{g}$ on $\mathfrak{g} \wedge \mathfrak{g}$. The dual map of $\xi$ is a Lie bracket  $[ , ]_{\mathfrak{g}^{*}} : \mathfrak{g}^{*} \times \mathfrak{g}^{*} \rightarrow \mathfrak{g}^{*}$  on $\mathfrak{g}^{*}.$ It is well-known that $(\mathfrak{g},\mathfrak{g}^{*})$ is a Lie bialgebra.\\~~~~~\\
Let $(G,\Pi_{G})$ be a Poisson-Lie group with Lie bialgebra $(\mathfrak{g},\mathfrak{g}^{*})$. Let $a$ be a bilinear, symmetric and non-degenerate form on $\mathfrak{g}^{*}$ and $g$  the
contravariant pseudo-Riemannian given by $g = (L_{x})_{*}a$. We say that $(G,\Pi_{G},g)$  is a pseudo-Riemannian Poisson-Lie group if, and only if, the Poisson tensor $\Pi_{G}$ and the metric $g$ are compatible in the sense given by M.Boucetta in \cite{Bouc}, as follows: 
  \begin{equation} \label{Ri}
[ Ad_x^*(A_\alpha\gamma+ad^*_{\Pi^{l}_{G}(x)(\alpha)}\gamma),Ad_x^*(\beta)]_{\mathfrak{g}^*}+ [Ad_x^*(\alpha),Ad_x^*(A_\beta\gamma+ad^*_{\Pi^{l}_{G}(x)(\beta)}\gamma)]_{\mathfrak{g}^*}=0,
\end{equation}
for any $x \in G$ and for any $\alpha,\beta,\gamma \in \mathfrak{g}^{*}$, where  $A$ is the infinitesimal Levi-Civita connection associated with $([ , ]_{\mathfrak{g}^{*}} , a)$.\\ Note that  the infinitesimal Levi-Civita connection $A$ is the restriction of the Levi-Civita contravariant connection $\mathcal{D}^{G}$ to $\mathfrak{g}^{*} \times \mathfrak{g}^{*}$ and  is given for any $\alpha,\beta,\gamma \in \mathfrak{g}^{*},$  by:
\begin{equation} \label{e1}
 2 a(A_{\alpha}\beta, \gamma)  =
 a([\alpha, \beta]_{\mathfrak{g^{*}}}, \gamma) + a([\gamma,
  \alpha]_{\mathfrak{g^{*}}}, \beta)  +  a([\gamma, \beta]_{\mathfrak{g^{*}}}, \alpha).
\end{equation}
In \cite{Bouc}, M.Boucetta showed that if $(G,\Pi_{G},g)$ is a pseudo-Riemannian Poisson Lie group then,  its dual Lie algebra $(\mathfrak{g}^{*},[ , ]_{\mathfrak{g}^{*}},a)$ equipped with the form $a$ is a pseudo-Riemannian Lie algebra, i.e, for any $\alpha,\beta,\gamma \in \mathfrak{g}^{*}$ we have,
 \begin{equation} \label{Riemannian Lie}
[ A_{\alpha}\beta,\gamma]_{\mathfrak{g}^{*}}+ [\alpha,A_{\gamma}\beta]_{\mathfrak{g}^{*}}=0.
\end{equation} 
   \section{Riemannian geometry of Sanchez de Alvarez tangent Poisson-Lie  group.}
 Let $G$ be a n-dimensional Lie group with multiplication $m : G \times G \rightarrow G : (x,y) \mapsto xy$ and with Lie algebra $\mathfrak{g}$. We denotes by $L_{y} : G \rightarrow G : x \mapsto yx$ for the left translation and $R_{y} : G \rightarrow G : x \mapsto xy$ for the right translation of $G$ by $y$.\\
     The tangent map of $m,$
     \begin{equation} \label{loi}
    Tm : TG \times TG \mapsto TG : (X_x,Y_y) \mapsto T_yL_xY_y + T_xR_yX_x,
    \end{equation}
    defines a Lie group structure on $TG$ with identity element $(e,0)$ and with Lie algebra the semi-direct product of Lie algebra $\mathfrak{g} \rtimes \mathfrak{g}$, with  bracket \cite{M.N}:
    \begin{equation}
  [(X,Y),(X^{'},Y^{'})]_{\mathfrak{g} \rtimes \mathfrak{g}} = ([X,X^{'}]_{\mathfrak{g}},[X,Y^{'}]_{\mathfrak{g}} + [Y,X^{'}]_{\mathfrak{g}}), \hspace{2mm} \hbox{where} \hspace{2mm} (X,X^{'}),(Y,Y^{'}) \in \mathfrak{g} \rtimes \mathfrak{g}.
    \end{equation}
    Note that the Lie algebra $\mathfrak{g} \rtimes \mathfrak{g}$ of $TG$ is given by \cite{yakob}:
    \begin{equation*}
    \mathfrak{g} \rtimes \mathfrak{g}  = \{ (X,Y) = X^{c} + Y^{v} \mid X,Y \in \mathfrak{g} \}.
    \end{equation*}
    Let $(G,\Pi)$ be a Poisson-Lie group  with Lie bilagebra $(\mathfrak{g},\mathfrak{g}^{*})$ and $TG$ the tangent bundle of $G$.
   By M.Boumaiza and N.Zaalani \cite{M.N} the tangent bundle $TG$ of $G$  with the multiplication (\ref{loi}) and with its tangent Poisson structure $\Pi_{TG}$ defined in the sense of  Sanchez de Alvarez  \cite{sa de al}, is a Poisson-Lie group with Lie bialgebra $(\mathfrak{g} \rtimes \mathfrak{g}, \mathfrak{g}^{*} \ltimes \mathfrak{g}^{*})$, where
     $\mathfrak{g}^{*} \ltimes \mathfrak{g}^{*}$ is the semi-direct product Lie algebra  with bracket:
\begin{equation} \label{e2}
[(\alpha,\beta),(\alpha',\beta')]_{{\mathfrak{g}}^*\ltimes {\mathfrak{g}}^*}=
([\alpha,\beta']_{\mathfrak{g}^{*}}+[\beta,\alpha']_{\mathfrak{g}^{*}},[\beta,\beta']_{\mathfrak{g}^{*}}),\;\;
 \hbox{where} \hspace{2mm} (\alpha,\beta),\;(\alpha',\beta')\in {\mathfrak{g}}^*\times {\mathfrak{g}}^*.
\end{equation}
    It has been shown that the complete and vertical lifts of any left invariant vector fields of $G$ are left invariant
    fields on the Lie group $TG$ (see proposition 1.3
page 183 of \cite{Yano}). In fact if $(X_1,...,X_n)$ is a basis for the Lie algebra $\mathfrak{g}$ of $G$ then
    $\{X_1^v = (0,X_1),...,X_n^v = (0,X_n),X_1^c = (X_1,0),...,X_n^c = (X_n,0)\}$ is a basis for the Lie algebra $\mathfrak{g} \rtimes \mathfrak{g}$ of $TG.$ \\
    Let $\tilde{g}$ be a left invariant pseudo-Riemannian metric on $G$. Then the complete lift of $\tilde{g}$ is a left invariant pseudo-Riemannian metric $\tilde{g}^{c}$ on $TG$ given for any $(X,Y),(X^{'},Y^{'}) \in \mathfrak{g} \rtimes \mathfrak{g}$ by:
    \begin{equation} 
\begin{array}{rcl}
 \tilde{g}^{c}((0,Y),(0,Y^{'}))  &=& 0,\\
\tilde{g}^{c}((X,0), (0,Y^{'})) &=& (\tilde{g}(X,Y^{'}))^{v} \\
\tilde{g}^{c}((X,0), (X^{'},0)) &=& (\tilde{g}(X,X^{'}))^{c}.
\end{array}
\end{equation}
The left invariant contravariant pseudo-Riemannian metric $g^{c}$  on $TG$ associated to 
$\tilde{g}^{c}$ is given for any $(\alpha,\beta), (\alpha^{'},\beta^{'}) \in \mathfrak{g}^{*} \times \mathfrak{g}^{*}$ by:
     \begin{equation} \label{e3}
\begin{array}{rcl}
 g^{c}((\alpha,0),(\alpha^{'},0))  &=& 0,\\
g^{c}((\alpha,0), (0,\beta^{'})) &=& (g(\alpha, \beta^{'}))^{v} \\
g^{c}((0,\beta), (0,\beta^{'})) &=& (g(\beta, \beta^{'}))^{c}.
\end{array}
\end{equation}   
We call this metric $g^{c}$ the complete left invariant contravariant pseudo-Riemannian metric on $TG$.\\~~~~\\
In the following proposition, we express the Levi-Civita contravariant connection $\mathcal{D}^{TG}$ associated to  $(\Pi_{TG},g^{c})$ in term of the Levi-Civita contravariant connection $\mathcal{D}^{G}$ associated to $(\Pi_{G},g)$.
     \begin{prop} \label{cnxcont}
     Let $(G,\Pi_{G},g)$ be a Poisson-Lie group equipped with the left invariant contravariant pseudo-Riemannian metric $g$ and $(TG,\Pi_{TG},g^{c})$  the Sanchez de Alvarez tangent Poisson-Lie group of $G$
     equipped with the complete left invariant  pseudo-Riemannian metric $g^{c}$ associated to $g$. Let $\mathcal{D}^{TG}$ and $\mathcal{D}^{G}$ be the Levi-Civita contravariant connections associated respectively to $(\Pi_{TG},g^{c})$ and $(\Pi_{G},g)$.
Then for any $(\alpha,\beta), (\alpha^{'},\beta^{'}) \in \mathfrak{g}^{*} \times \mathfrak{g}^{*}$ we have :$$\mathcal{D}^{TG}_{(\alpha,\beta)}(\alpha^{'},\beta^{'}) = \Big(\mathcal{D}^{G}_{\alpha}\beta^{'} + \mathcal{D}^{G}_{\beta}\alpha^{'} ,\mathcal{D}^{G}_{\beta}\beta^{'} \Big).$$
     \end{prop}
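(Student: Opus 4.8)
The plan is to use the uniqueness of the Levi-Civita contravariant connection. Since $\mathcal{D}^{TG}$ is the unique torsion-free connection associated to $(\Pi_{TG},g^{c})$ for which $g^{c}$ is parallel, it suffices to show that the map $\mathcal{D}$ defined by $\mathcal{D}_{(\alpha,\beta)}(\alpha',\beta') := (\mathcal{D}^{G}_{\alpha}\beta'+\mathcal{D}^{G}_{\beta}\alpha',\,\mathcal{D}^{G}_{\beta}\beta')$ obeys the Koszul formula \eqref{Ko} for $(\Pi_{TG},g^{c})$. As both $\Pi_{TG}$ and $g^{c}$ are left invariant, the connection is left invariant and it is enough to check the identity on left invariant $1$-forms, i.e.\ on $\mathfrak{g}^{*}\times\mathfrak{g}^{*}$. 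The key preliminary observation is that the restriction of $g^{c}$ to left invariant forms is the \emph{constant} symmetric bilinear form
\[
a^{c}\big((\alpha,\beta),(\alpha',\beta')\big)=a(\alpha,\beta')+a(\beta,\alpha'),
\]
which follows from \eqref{e3}: for left invariant $\beta,\beta'$ the function $g(\beta,\beta')=a(\beta,\beta')$ is constant on $G$, so its complete lift vanishes, while the vertical lift of a constant is that constant. One checks that $a^{c}$ is non-degenerate whenever $a$ is. Because $a^{c}$ is constant, every anchor term $\Pi_{TG}^{\sharp}(\cdot)\,g^{c}(\cdot,\cdot)$ in \eqref{Ko} vanishes on left invariant forms, exactly as happens on $G$, where \eqref{Ko} collapses to its algebraic form \eqref{e1}.

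Therefore the statement reduces to the purely algebraic identity
\[
2\,a^{c}\big(\mathcal{D}_{(\alpha,\beta)}(\alpha',\beta'),(\alpha'',\beta'')\big)=a^{c}\big([(\alpha,\beta),(\alpha',\beta')]_{\ltimes},(\alpha'',\beta'')\big)+a^{c}\big([(\alpha'',\beta''),(\alpha,\beta)]_{\ltimes},(\alpha',\beta')\big)+a^{c}\big([(\alpha'',\beta''),(\alpha',\beta')]_{\ltimes},(\alpha,\beta)\big),
\]
valid for all triples in $\mathfrak{g}^{*}\times\mathfrak{g}^{*}$, where $[\,,\,]_{\ltimes}$ is the bracket \eqref{e2} of $\mathfrak{g}^{*}\ltimes\mathfrak{g}^{*}$, which is precisely the Koszul bracket of $TG$ on left invariant forms (this is the content of the Lie bialgebra structure $(\mathfrak{g}\rtimes\mathfrak{g},\mathfrak{g}^{*}\ltimes\mathfrak{g}^{*})$ recalled above). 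I would expand the right-hand side by inserting \eqref{e2}: each bracket splits into a first component $[\cdot,\cdot]_{\mathfrak{g}^{*}}+[\cdot,\cdot]_{\mathfrak{g}^{*}}$ and a second component $[\cdot,\cdot]_{\mathfrak{g}^{*}}$, and pairing through $a^{c}$ converts the three brackets into nine scalar terms of the form $a([\,\cdot,\cdot\,]_{\mathfrak{g}^{*}},\cdot)$.

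For the left-hand side I would pair $\mathcal{D}_{(\alpha,\beta)}(\alpha',\beta')=(\mathcal{D}^{G}_{\alpha}\beta'+\mathcal{D}^{G}_{\beta}\alpha',\mathcal{D}^{G}_{\beta}\beta')$ against $(\alpha'',\beta'')$ via $a^{c}$, obtaining $2a(\mathcal{D}^{G}_{\alpha}\beta',\beta'')+2a(\mathcal{D}^{G}_{\beta}\alpha',\beta'')+2a(\mathcal{D}^{G}_{\beta}\beta',\alpha'')$, and then expand each summand with the infinitesimal Koszul formula \eqref{e1} for $(G,\Pi_{G},g)$ (recall that $\mathcal{D}^{G}$ restricts to $A$ on $\mathfrak{g}^{*}$). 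This again produces nine scalar terms. The decisive step, which I expect to be the only real obstacle, is the bookkeeping check that these nine terms coincide one-for-one with those from the right-hand side; the matching works precisely because the cross shape $[\alpha,\beta']_{\mathfrak{g}^{*}}+[\beta,\alpha']_{\mathfrak{g}^{*}}$ in the first slot of \eqref{e2} and the single bracket $[\beta,\beta']_{\mathfrak{g}^{*}}$ in the second slot interlock with the off-diagonal form of $a^{c}$. Once the two expansions are seen to agree, non-degeneracy of $a^{c}$ yields $\mathcal{D}=\mathcal{D}^{TG}$ on $\mathfrak{g}^{*}\times\mathfrak{g}^{*}$, proving the proposition. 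No analytic difficulty remains after the vanishing of the derivative terms is established; the argument is entirely algebraic.
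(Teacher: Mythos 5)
Your proposal is correct and follows essentially the same route as the paper: the paper likewise reduces the Koszul formula \eqref{Ko} on left invariant forms to its algebraic version, expands $2g^{c}(\mathcal{D}^{TG}_{(\alpha,\beta)}(\alpha',\beta'),(\alpha'',\beta''))$ through the semi-direct bracket \eqref{e2} and the metric \eqref{e3}, and regroups the resulting nine terms via \eqref{e1} into $2g^{c}((\mathcal{D}^{G}_{\alpha}\beta'+\mathcal{D}^{G}_{\beta}\alpha',\mathcal{D}^{G}_{\beta}\beta'),(\alpha'',\beta''))$, concluding by non-degeneracy. Your reformulation via a candidate connection and uniqueness, and your observation that the complete lift of the constant $a(\beta,\beta')$ vanishes so that $a^{c}$ is the off-diagonal pairing, are only cosmetic variations on that same computation.
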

     \begin{proof}
     According to equations (\ref{e1}), (\ref{e2}) and (\ref{e3}) we obtain:
     $$ \begin{array}{rcl}
 2g^{c}(\mathcal{D}^{TG}_{(\alpha,\beta)}(\alpha^{'},\beta^{'}), (\alpha^{''},\beta^{''})) &=& g^{c}([(\alpha,\beta),(\alpha^{'},\beta^{'})]_{\mathfrak{g}^{*} \ltimes \mathfrak{g}^{*}},(\alpha^{''},\beta^{''})) + 
 g^{c}([(\alpha^{''},\beta^{''}),(\alpha,\beta)]_{\mathfrak{g}^{*} \ltimes \mathfrak{g}^{*}},(\alpha^{'},\beta^{'}))\\ &\quad \quad & + \quad g^{c}([(\alpha^{''},\beta^{''}),(\alpha^{'},\beta^{'})]_{\mathfrak{g}^{*} \ltimes \mathfrak{g}^{*}},(\alpha,\beta))\\ &=& g^{c}(([\alpha,\beta^{'}]_{\mathfrak{g}^{*}}
 + [\beta,\alpha^{'}]_{\mathfrak{g}^{*}},[\beta,\beta^{'}]_{\mathfrak{g}^{*}}),(\alpha^{''},\beta^{''}))\\  &\quad \quad & + \quad  g^{c}(([\alpha^{''},\beta]_{\mathfrak{g}^{*}}
 + [\beta^{''},\alpha]_{\mathfrak{g}^{*}},[\beta^{''},\beta]_{\mathfrak{g}^{*}}),(\alpha^{'},\beta^{'}))\\ &\quad \quad & + \quad  g^{c}(([\alpha^{''},\beta^{'}]_{\mathfrak{g}^{*}}
 + [\beta^{''},\alpha^{'}]_{\mathfrak{g}^{*}},[\beta^{''},\beta^{'}]_{\mathfrak{g}^{*}}),(\alpha,\beta))\\
 &=& \Big( g([\alpha,\beta^{'}]_{\mathfrak{g}^{*}},\beta^{''}) + g([\beta,\alpha^{'}]_{\mathfrak{g}^{*}},\beta^{''}) + g([\beta,\beta^{'}]_{\mathfrak{g}^{*}},\alpha^{''})\Big)^{v} \\&\quad \quad & + \quad    
 \Big( g([\alpha^{''},\beta]_{\mathfrak{g}^{*}},\beta^{'}) + g([\beta^{''},\alpha]_{\mathfrak{g}^{*}},\beta^{'}) + g([\beta^{''},\beta]_{\mathfrak{g}^{*}},\alpha^{'})\Big)^{v} \\ &\quad \quad &  + \quad
 \Big( g([\alpha^{''},\beta^{'}]_{\mathfrak{g}^{*}},\beta) + g([\beta^{''},\alpha^{'}]_{\mathfrak{g}^{*}},\beta) + g([\beta^{''},\beta^{'}]_{\mathfrak{g}^{*}},\alpha)\Big)^{v}\\ &\quad \quad & + \quad \Big(   g([\beta,\beta^{'}]_{\mathfrak{g}^{*}},\beta^{''}) + g([\beta^{''},\beta]_{\mathfrak{g}^{*}},\beta^{'}) + g[\beta^{''},\beta^{'}]_{\mathfrak{g}^{*}},\beta)\Big)^{c}
 \\ &=& 2\Big((g(\mathcal{D}^{G}_{\alpha}\beta^{'},\beta^{''}))^{v} + (g(\mathcal{D}^{G}_{\beta}\alpha^{'},\beta^{''}))^{v} +   (g(\mathcal{D}^{G}_{\beta}\beta^{'},\alpha^{''}))^{v} + (g(\mathcal{D}^{G}_{\beta}\beta^{'},\beta^{''}))^{c} \Big) \\ &=& 2 g^{c}((\mathcal{D}^{G}_{\alpha}\beta^{'} + \mathcal{D}^{G}_{\beta}\alpha^{'},\mathcal{D}^{G}_{\beta}\beta^{'}), (\alpha^{''},\beta^{''})).
 \end{array}$$
     \end{proof}
     \begin{lem} \label{courbure}
      Let $R^{TG}$ and $R^{G}$ be the curvatures of $\mathcal{D}^{TG}$ and  $\mathcal{D}^{G}$ respectively. Then for any $(\alpha,\beta), (\alpha^{'},\beta^{'}), (\alpha^{''},\beta^{''}) \in \mathfrak{g}^{*} \times \mathfrak{g}^{*}$ we have :
    $$\mathcal{R}^{TG}((\alpha,\beta),(\alpha^{'},\beta^{'}))(\alpha^{''},\beta^{''}) = \Big(\mathcal{R}^{G}(\alpha,\beta^{'})\beta^{''} + \mathcal{R}^{G}(\beta,\alpha^{'})\beta^{''} + \mathcal{R}^{G}(\beta,\beta^{'})\alpha^{''},\mathcal{R}^{G}(\beta,\beta^{'})\beta^{''} \Big).$$
     \end{lem}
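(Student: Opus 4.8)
The plan is to compute the curvature directly from its defining formula (\ref{curva}), feeding in the explicit expression for $\mathcal{D}^{TG}$ already obtained in Proposition \ref{cnxcont} together with the semidirect-product bracket (\ref{e2}). Writing $u=(\alpha,\beta)$, $v=(\alpha',\beta')$, $w=(\alpha'',\beta'')$ and using the shorthand $\mathcal{D}^{TG}_{(\alpha,\beta)}(X,Y)=(\mathcal{D}^{G}_{\alpha}Y+\mathcal{D}^{G}_{\beta}X,\,\mathcal{D}^{G}_{\beta}Y)$ from the Proposition, I would expand
$$\mathcal{R}^{TG}(u,v)w=\mathcal{D}^{TG}_{u}\mathcal{D}^{TG}_{v}w-\mathcal{D}^{TG}_{v}\mathcal{D}^{TG}_{u}w-\mathcal{D}^{TG}_{[u,v]_{\mathfrak{g}^{*}\ltimes\mathfrak{g}^{*}}}w$$
term by term and then read off the two components of the resulting pair.

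First I would treat the second (the $\mathfrak{g}$-valued) component, which is the easy slot. Since the Proposition shows that the second entry of $\mathcal{D}^{TG}_{(\alpha,\beta)}(X,Y)$ depends only on the $\beta$- and $Y$-entries, the second components of the two iterated terms are $\mathcal{D}^{G}_{\beta}\mathcal{D}^{G}_{\beta'}\beta''$ and $\mathcal{D}^{G}_{\beta'}\mathcal{D}^{G}_{\beta}\beta''$. Because (\ref{e2}) gives the second entry of $[u,v]_{\mathfrak{g}^{*}\ltimes\mathfrak{g}^{*}}$ as $[\beta,\beta']_{\mathfrak{g}^{*}}$, the last term contributes $\mathcal{D}^{G}_{[\beta,\beta']_{\mathfrak{g}^{*}}}\beta''$, and the three pieces collapse exactly to $\mathcal{R}^{G}(\beta,\beta')\beta''$, matching the asserted second component.

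The first component is where the real bookkeeping lies. Expanding $\mathcal{D}^{TG}_{u}\mathcal{D}^{TG}_{v}w$ and $\mathcal{D}^{TG}_{v}\mathcal{D}^{TG}_{u}w$ via the Proposition produces six double-covariant-derivative terms, and $\mathcal{D}^{TG}_{[u,v]}w$ contributes three more terms involving the brackets $[\alpha,\beta']_{\mathfrak{g}^{*}}$, $[\beta,\alpha']_{\mathfrak{g}^{*}}$, $[\beta,\beta']_{\mathfrak{g}^{*}}$ that emerge from the first entry of (\ref{e2}). The plan is to reorganize these into the sum $\mathcal{R}^{G}(\alpha,\beta')\beta''+\mathcal{R}^{G}(\beta,\alpha')\beta''+\mathcal{R}^{G}(\beta,\beta')\alpha''$, which I would verify by writing out each of the three $\mathcal{R}^{G}$ from the definition (\ref{curva}) and matching term by term.

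I expect no conceptual obstacle: the argument uses nothing beyond Proposition \ref{cnxcont}, the bracket (\ref{e2}), and the definition (\ref{curva}). The only genuine point requiring care is clerical, namely checking that the mixed cross terms (those coupling the first $\mathfrak{g}^{*}$-slot of one argument with the second slot of another) pair up with precisely the right bracket terms $[\alpha,\beta']_{\mathfrak{g}^{*}}$ and $[\beta,\alpha']_{\mathfrak{g}^{*}}$, so that the $\mathcal{R}^{G}(\alpha,\beta')\beta''$ and $\mathcal{R}^{G}(\beta,\alpha')\beta''$ contributions assemble cleanly rather than leaving stray residual terms.
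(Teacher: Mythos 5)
Your proposal is correct and follows essentially the same route as the paper: a direct expansion of the defining formula (\ref{curva}) using the explicit expression for $\mathcal{D}^{TG}$ from Proposition \ref{cnxcont} and the bracket (\ref{e2}), followed by regrouping the resulting terms componentwise into the stated combinations of $\mathcal{R}^{G}$. The bookkeeping you flag (matching the cross terms $\mathcal{D}^{G}_{\alpha}\mathcal{D}^{G}_{\beta'}\beta''$, $\mathcal{D}^{G}_{\beta}\mathcal{D}^{G}_{\alpha'}\beta''$, etc.\ with the bracket contributions $\mathcal{D}^{G}_{[\alpha,\beta']_{\mathfrak{g}^{*}}}\beta''$ and $\mathcal{D}^{G}_{[\beta,\alpha']_{\mathfrak{g}^{*}}}\beta''$) does work out cleanly, exactly as in the paper's computation.
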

     \begin{proof}
     Using the definition of the curvature tensor (\ref{curva}) and  proposition (\ref{cnxcont}). We obtain: 
      $$\begin{array}{rcl}
\mathcal{R}^{TG}((\alpha,\beta),(\alpha^{'},\beta^{'}))(\alpha^{''},\beta^{''}) &=& \mathcal{D}^{TG}_{(\alpha,\beta)}\mathcal{D}^{TG}_{(\alpha^{'},\beta^{'})}(\alpha^{''},\beta^{''}) - \mathcal{D}^{TG}_{(\alpha^{'},\beta^{'})}\mathcal{D}^{TG}_{(\alpha,\beta)}(\alpha^{''},\beta^{''})\\ &\quad \quad & - \quad  \mathcal{D}^{TG}_{[(\alpha,\beta),(\alpha^{'},\beta^{'}]_{{\mathfrak{g}}^*\ltimes {\mathfrak{g}}^*}}(\alpha^{''},\beta^{''}) \\ &=& \mathcal{D}^{TG}_{(\alpha,\beta)}(\mathcal{D}^{G}_{\alpha^{'}}\beta^{''} + \mathcal{D}^{G}_{\beta^{'}}\alpha^{''},\mathcal{D}^{G}_{\beta^{'}}\beta^{''}) - \mathcal{D}^{TG}_{(\alpha^{'},\beta^{'})}(\mathcal{D}^{G}_{\alpha}\beta^{''} + \mathcal{D}^{G}_{\beta}\alpha^{''},\mathcal{D}^{G}_{\beta}\beta^{''})\\ &\quad \quad & - \quad \mathcal{D}^{TG}_{([\alpha,\beta^{'}]_{\mathfrak{g}^{*}} + [\beta,\alpha^{'}]_{\mathfrak{g}^{*}},[\beta,\beta^{'}]_{\mathfrak{g}^{*}})}(\alpha^{''},\beta^{''})\\ &=&
\Big(\mathcal{D}^{G}_{\alpha}\mathcal{D}^{G}_{\beta^{'}}\beta^{''} + \mathcal{D}^{G}_{\beta}\mathcal{D}^{G}_{\alpha^{'}}\beta^{''} + \mathcal{D}^{G}_{\beta}\mathcal{D}^{G}_{\beta^{'}}\alpha^{''},\mathcal{D}^{G}_{\beta}\mathcal{D}^{G}_{\beta^{'}}\beta^{''}\Big)\\ &\quad \quad & - \quad 
\Big(\mathcal{D}^{G}_{\alpha^{'}}\mathcal{D}^{G}_{\beta}\beta^{''} + \mathcal{D}^{G}_{\beta^{'}}\mathcal{D}^{G}_{\alpha}\beta^{''} + \mathcal{D}^{G}_{\beta^{'}}\mathcal{D}^{G}_{\beta}\alpha^{''},\mathcal{D}^{G}_{\beta^{'}}\mathcal{D}^{G}_{\beta}\beta^{''}\Big)\\ &\quad \quad & - \quad
\Big(\mathcal{D}^{G}_{[\alpha,\beta^{'}]_{\mathfrak{g}^{*}}}\beta^{''} + \mathcal{D}^{G}_{[\beta,\alpha^{'}]_{\mathfrak{g}^{*}}}\beta^{''} + \mathcal{D}^{G}_{[\beta,\beta^{'}]_{\mathfrak{g}^{*}}}\alpha^{''},\mathcal{D}^{G}_{[\beta,\beta^{'}]_{\mathfrak{g}^{*}}}\beta^{''}\Big)
 \\ &=& \Big(\mathcal{R}^{G}(\alpha,\beta^{'})\beta^{''} + \mathcal{R}^{G}(\beta,\alpha^{'})\beta^{''} + 
 \mathcal{R}^{G}(\beta,\beta^{'})\alpha^{''}, \mathcal{R}^{G}(\beta,\beta^{'})\beta^{''} \Big)
\end{array}$$
     \end{proof}
     \begin{lem} \label{sym}
      For any $(\alpha,\beta), (\alpha^{'},\beta^{'}), (\alpha^{''},\beta^{''}), (\alpha^{'''},\beta^{'''}) \in \mathfrak{g}^{*} \times \mathfrak{g}^{*}$ we have:
     $$ \begin{array}{rcl}
      (\mathcal{D}^{TG}_{(\alpha,\beta)}\mathcal{R}^{TG})((\alpha^{'},\beta^{'}),(\alpha^{''},\beta^{''})(\alpha^{'''},\beta^{'''}) &=& \Big((\mathcal{D}^{G}_{\alpha}\mathcal{R}^{G})(\beta^{'},\beta^{''})\beta^{'''} + (\mathcal{D}^{G}_{\beta}\mathcal{R}^{G})(\alpha^{'},\beta^{''})\beta^{'''}\\  &+&  (\mathcal{D}^{G}_{\beta}\mathcal{R}^{G})(\beta^{'},\alpha^{''})\beta^{'''} +  (\mathcal{D}^{G}_{\beta}\mathcal{R}^{G})(\beta^{'},\beta^{''})\alpha^{'''}, (\mathcal{D}^{G}_{\beta}\mathcal{R}^{G})(\beta^{'},\beta^{''})\beta^{'''}\Big)
      \end{array}$$
     \end{lem}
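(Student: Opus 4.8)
The plan is to expand the left-hand side directly from the definition \eqref{local} of $\mathcal{D}^{TG}\mathcal{R}^{TG}$ and then substitute the two formulas already established, namely Proposition \ref{cnxcont} for $\mathcal{D}^{TG}$ and Lemma \ref{courbure} for $\mathcal{R}^{TG}$. Writing $\Xi=(\mathcal{D}^{TG}_{(\alpha,\beta)}\mathcal{R}^{TG})((\alpha',\beta'),(\alpha'',\beta''))(\alpha''',\beta''')$, formula \eqref{local} presents $\Xi$ as the derivative term $\mathcal{D}^{TG}_{(\alpha,\beta)}\big(\mathcal{R}^{TG}((\alpha',\beta'),(\alpha'',\beta''))(\alpha''',\beta''')\big)$ minus the three correction terms in which $\mathcal{D}^{TG}_{(\alpha,\beta)}$ is applied to each of the three arguments in turn. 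The whole computation stays inside $\mathfrak{g}^{*}\times\mathfrak{g}^{*}$, so every quantity is an ordered pair and I only have to track its two components.

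First I would compute the curvature pair $\mathcal{R}^{TG}((\alpha',\beta'),(\alpha'',\beta''))(\alpha''',\beta''')$ using Lemma \ref{courbure}: its second slot is simply $\mathcal{R}^{G}(\beta',\beta'')\beta'''$ and its first slot is the three-term sum $\mathcal{R}^{G}(\alpha',\beta'')\beta'''+\mathcal{R}^{G}(\beta',\alpha'')\beta'''+\mathcal{R}^{G}(\beta',\beta'')\alpha'''$. Applying Proposition \ref{cnxcont} to this pair yields the derivative term. For each of the three correction terms I would first replace one argument by its image under $\mathcal{D}^{TG}_{(\alpha,\beta)}$, again by Proposition \ref{cnxcont}, so that $(\alpha',\beta')\mapsto(\mathcal{D}^{G}_{\alpha}\beta'+\mathcal{D}^{G}_{\beta}\alpha',\mathcal{D}^{G}_{\beta}\beta')$ and similarly for the other two arguments, and then expand the resulting $\mathcal{R}^{TG}$ by Lemma \ref{courbure}.

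The heart of the argument is the regrouping. On the second component everything collapses at once: the four surviving pieces are $\mathcal{D}^{G}_{\beta}(\mathcal{R}^{G}(\beta',\beta'')\beta''')$ together with $-\mathcal{R}^{G}(\mathcal{D}^{G}_{\beta}\beta',\beta'')\beta'''$, $-\mathcal{R}^{G}(\beta',\mathcal{D}^{G}_{\beta}\beta'')\beta'''$ and $-\mathcal{R}^{G}(\beta',\beta'')\mathcal{D}^{G}_{\beta}\beta'''$, which is exactly $(\mathcal{D}^{G}_{\beta}\mathcal{R}^{G})(\beta',\beta'')\beta'''$ by \eqref{local}. On the first component I would collect the sixteen terms and match them against the four targets $(\mathcal{D}^{G}_{\alpha}\mathcal{R}^{G})(\beta',\beta'')\beta'''$, $(\mathcal{D}^{G}_{\beta}\mathcal{R}^{G})(\alpha',\beta'')\beta'''$, $(\mathcal{D}^{G}_{\beta}\mathcal{R}^{G})(\beta',\alpha'')\beta'''$ and $(\mathcal{D}^{G}_{\beta}\mathcal{R}^{G})(\beta',\beta'')\alpha'''$, each expanded through \eqref{local}: the four genuine derivative terms land one in each target, and each of the twelve correction terms matches exactly one of the twelve correction terms produced by the four targets. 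The single real obstacle is bookkeeping: one must track which primed index occupies which slot, and which derivative $\mathcal{D}^{G}_{\alpha}$ or $\mathcal{D}^{G}_{\beta}$ acts on it, so that no term is duplicated, dropped, or given the wrong sign. There is no conceptual difficulty beyond this organized substitution.
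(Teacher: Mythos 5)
Your proposal is correct and follows exactly the paper's own route: expand $(\mathcal{D}^{TG}_{(\alpha,\beta)}\mathcal{R}^{TG})$ from definition \eqref{local}, substitute Proposition \ref{cnxcont} and Lemma \ref{courbure}, and regroup the resulting terms componentwise into the four $(\mathcal{D}^{G}\mathcal{R}^{G})$ expressions in the first slot and one in the second. The term count and matching you describe (four derivative terms plus twelve correction terms against the four expanded targets) is precisely the bookkeeping carried out in the paper's displayed computation.
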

     \begin{proof}
     According to equation (\ref{local}), proposition (\ref{cnxcont}) and lemma (\ref{courbure}) we obtain:
     \begin{equation*}
     \begin{array}{rcl}
     (\mathcal{D}^{TG}_{(\alpha,\beta)}\mathcal{R}^{TG}) &=& \mathcal{D}^{TG}_{(\alpha,\beta)}(\mathcal{R}^{TG}((\alpha^{'},\beta^{'}),(\alpha^{''},\beta^{''}))(\alpha^{'''},\beta^{'''}) - \mathcal{R}^{TG}(\mathcal{D}^{TG}_{(\alpha,\beta)}(\alpha^{'},\beta^{'}),(\alpha^{''},\beta^{''}))(\alpha^{'''},\beta^{'''})\\ &\quad \quad & - \quad \mathcal{R}^{TG}((\alpha^{'},\beta^{'}),(\alpha^{''},\beta^{''}))\mathcal{D}^{TG}_{(\alpha,\beta)}(\alpha^{'''},\beta^{'''}) - \mathcal{R}^{TG}((\alpha^{'},\beta^{'}),\mathcal{D}^{TG}_{(\alpha,\beta)}(\alpha^{''},\beta^{''}))(\alpha^{'''},\beta^{'''})\\ &=&
     \mathcal{D}^{TG}_{(\alpha,\beta)} \Big((\mathcal{R}^{G}(\alpha^{'},\beta^{''})\beta^{'''} + \mathcal{R}^{G}(\beta^{'},\alpha^{''})\beta^{'''} + \mathcal{R}^{G}(\beta^{'},\beta^{''})\alpha^{'''},\mathcal{R}^{G}(\beta^{'},\beta^{''})\beta^{'''} \Big)\\ &\quad \quad & - \quad
     \mathcal{R}^{TG}((\mathcal{D}^{G}_{\alpha}\beta^{'} + \mathcal{D}^{G}_{\beta}\alpha^{'},\mathcal{D}^{G}_{\beta}\beta^{'}),(\alpha^{''},\beta^{''}))(\alpha^{'''},\beta^{'''}) \\ &\quad \quad & - \quad
     \mathcal{R}^{TG}((\alpha^{'},\beta^{'}),(\alpha^{''},\beta^{''}))(\mathcal{D}^{G}_{\alpha}\beta^{'''} + \mathcal{D}^{G}_{\beta}\alpha^{'''},\mathcal{D}^{G}_{\beta}\beta^{'''}) \\ &\quad \quad & - \quad
     \mathcal{R}^{TG}((\alpha^{'},\beta^{'}),(\mathcal{D}^{G}_{\alpha}\beta^{''} + \mathcal{D}^{G}_{\beta}\alpha^{''},\mathcal{D}^{G}_{\beta}\beta^{''}))(\alpha^{'''},\beta^{'''})\\ &=&
     \Big(\mathcal{D}^{G}_{\alpha}(\mathcal{R}^{G}(\beta^{'},\beta^{''})\beta^{'''}) + \mathcal{D}^{G}_{\beta}(\mathcal{R}^{G}(\alpha^{'},\beta^{''})\beta^{'''}) + \mathcal{D}^{G}_{\beta}(\mathcal{R}^{G}(\beta^{'},\alpha^{''})\beta^{'''})\\ &\quad \quad & + \quad \mathcal{D}^{G}_{\beta}(\mathcal{R}^{G}(\beta^{'},\beta^{''})\alpha^{'''}), \mathcal{D}^{G}_{\beta}(\mathcal{R}^{G}(\beta^{'},\beta^{''})\beta^{'''}) \Big) -
     \Big(\mathcal{R}^{G}(\mathcal{D}^{G}_{\alpha}\beta^{'},\beta^{''})\beta^{'''} \\ &\quad \quad & + \quad \mathcal{R}^{G}(\mathcal{D}^{G}_{\beta}\alpha^{'},\beta^{''})\beta^{'''} + \mathcal{R}^{G}(\mathcal{D}^{G}_{\beta}\beta^{'},\alpha^{''})\beta^{'''} + \mathcal{R}^{G}(\mathcal{D}^{G}_{\beta}\beta^{'},\beta^{''})\alpha^{'''},\mathcal{R}^{G}(\mathcal{D}^{G}_{\beta}\beta^{'},\beta^{''})\beta^{'''} \Big)\\ &\quad \quad & - \quad
     \Big(\mathcal{R}^{G}(\alpha^{'},\beta^{''})\mathcal{D}^{G}_{\beta}\beta^{'''} + \mathcal{R}^{G}(\beta^{'},\alpha^{''})\mathcal{D}^{G}_{\beta}\beta^{'''} + \mathcal{R}^{G}(\beta^{'},\beta^{''})\mathcal{D}^{G}_{\alpha}\beta^{'''}\\ &\quad \quad & + \quad \mathcal{R}^{G}(\beta^{'},\beta^{''})\mathcal{D}^{G}_{\beta}\alpha^{'''},\mathcal{R}^{G}(\beta^{'},\beta^{''})\mathcal{D}^{G}_{\beta}\beta^{'''} \Big) - \Big(\mathcal{R}^{G}(\alpha^{'},\mathcal{D}^{G}_{\beta}\beta^{''})\beta^{'''} + \mathcal{R}^{G}(\beta^{'},\mathcal{D}^{G}_{\alpha}\beta^{''})\beta^{'''} \\ &\quad \quad & + \quad \mathcal{R}^{G}(\beta^{'},\mathcal{D}^{G}_{\beta}\alpha^{''})\beta^{'''} + \mathcal{R}^{G}(\beta^{'},\mathcal{D}^{G}_{\beta}\beta^{''})\alpha^{'''},\mathcal{R}^{G}(\beta^{'},\mathcal{D}^{G}_{\beta}\beta^{'})\beta^{'''} \Big)\\ &=& \Big((\mathcal{D}^{G}_{\alpha}\mathcal{R}^{G})(\beta^{'},\beta^{''})\beta^{'''} + (\mathcal{D}^{G}_{\beta}\mathcal{R}^{G})(\alpha^{'},\beta^{''})\beta^{'''} + (\mathcal{D}^{G}_{\beta}\mathcal{R}^{G})(\beta^{'},\alpha^{''})\beta^{'''}\\ &\quad \quad & + \quad (\mathcal{D}^{G}_{\beta}\mathcal{R}^{G})(\beta^{'},\beta^{''})\alpha^{'''}, (\mathcal{D}^{G}_{\beta}\mathcal{R}^{G})(\beta^{'},\beta^{''})\beta^{'''}\Big).
     \end{array}
     \end{equation*}
     \end{proof}
      \begin{thm} \label{th1}
       Let $(G,\Pi_{G},g)$ be a Poisson-Lie group  equipped with the left invariant contravariant pseudo-Riemannian metric $g$ and $(TG,\Pi_{TG},g^{c})$ be the  Sanchez de Alvarez tangent Poisson-Lie group of $G$
     equipped with the left invariant complete  pseudo-Riemannian metric $g^{c}$ associated to $g$. Let $\mathcal{D}^{TG}$ and $\mathcal{D}^{G}$ be the Levi-Civita contravariant connections associated respectively to $(\Pi_{TG},g^{c})$ and $(\Pi_{G},g)$. Then
     \begin{enumerate}
     \item  $\mathcal{D}^{G}$ is flat if, and only if,  $\mathcal{D}^{TG}$ is flat.
     
     \item   $\mathcal{D}^{G}$ is locally symmetric if and only if $\mathcal{D}^{TG}$ is  locally symmetric.
     \end{enumerate}
      \end{thm}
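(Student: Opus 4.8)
The plan is to read off both equivalences directly from the two structural formulas already in hand, namely Lemma \ref{courbure} for the curvature and Lemma \ref{sym} for its covariant derivative; no new computation is needed, only two elementary observations about these formulas. The first observation drives the forward implications: in each formula every summand on the right-hand side is a value of the corresponding tensor of $(G,\Pi_G,g)$, so if that base tensor vanishes identically then so does the one on $TG$. The second observation drives the reverse implications: by restricting the covector arguments to elements of the form $(0,\beta)$ one kills every mixed term and isolates a single clean copy of the base tensor in the second component. Throughout I would use that $\mathcal{R}^{TG}$ and $\mathcal{D}^{TG}\mathcal{R}^{TG}$ are tensorial in all their arguments, so it suffices to test them on the spanning family $\{(\alpha,0),(0,\beta):\alpha,\beta\in\mathfrak{g}^{*}\}$ of $\mathfrak{g}^{*}\times\mathfrak{g}^{*}$, and that $g^{c}$ is nondegenerate.

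For part (1), suppose first that $\mathcal{D}^{G}$ is flat, so $\mathcal{R}^{G}\equiv 0$. Each of the four terms on the right of the formula in Lemma \ref{courbure} is a value of $\mathcal{R}^{G}$, hence $\mathcal{R}^{TG}\equiv 0$ and $\mathcal{D}^{TG}$ is flat. Conversely, assume $\mathcal{D}^{TG}$ is flat and evaluate Lemma \ref{courbure} on $(0,\beta),(0,\beta'),(0,\beta'')$. The three summands carrying $\alpha$, $\alpha'$ or $\alpha''$ drop out and one is left with
$$\mathcal{R}^{TG}((0,\beta),(0,\beta'))(0,\beta'') = \big(0,\ \mathcal{R}^{G}(\beta,\beta')\beta''\big).$$
Since the left side vanishes, $\mathcal{R}^{G}(\beta,\beta')\beta''=0$ for all $\beta,\beta',\beta''\in\mathfrak{g}^{*}$, so $\mathcal{R}^{G}\equiv 0$ and $\mathcal{D}^{G}$ is flat.

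Part (2) runs identically with Lemma \ref{sym} in place of Lemma \ref{courbure}. If $\mathcal{D}^{G}\mathcal{R}^{G}=0$ then every summand in the formula of Lemma \ref{sym} is a value of $\mathcal{D}^{G}\mathcal{R}^{G}$, whence $\mathcal{D}^{TG}\mathcal{R}^{TG}=0$. Conversely, restricting all four covector arguments to the second factor, $(0,\beta),(0,\beta'),(0,\beta''),(0,\beta''')$, annihilates the first three summands in each component and leaves
$$(\mathcal{D}^{TG}_{(0,\beta)}\mathcal{R}^{TG})((0,\beta'),(0,\beta''))(0,\beta''') = \big(0,\ (\mathcal{D}^{G}_{\beta}\mathcal{R}^{G})(\beta',\beta'')\beta'''\big),$$
so local symmetry of $\mathcal{D}^{TG}$ forces $\mathcal{D}^{G}\mathcal{R}^{G}=0$. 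There is no genuine obstacle here; the only point deserving explicit mention is the bookkeeping in this last restriction, i.e. verifying that setting the $\alpha$-type arguments to zero really does cancel all the mixed terms and leaves precisely the stated second component. This is immediate once one notes that $\mathcal{D}^{G}_{(\cdot)}\mathcal{R}^{G}$ is $\mathbb{R}$-linear in its direction slot and tensorial (hence linear) in each of its three remaining slots, so any term containing a vanishing argument is itself zero.
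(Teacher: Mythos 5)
Your proposal is correct and follows essentially the same route as the paper: the forward implications are read off from Lemmas \ref{courbure} and \ref{sym} because every summand is a value of the base tensor, and the converses are obtained by evaluating on arguments of the form $(0,\beta)$, which isolates $\mathcal{R}^{G}(\beta,\beta')\beta''$ (respectively $(\mathcal{D}^{G}_{\beta}\mathcal{R}^{G})(\beta',\beta'')\beta'''$) in the second component. The only difference is that you spell out the tensoriality justification for the cancellation of the mixed terms, which the paper leaves implicit.
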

      \begin{proof}
      \begin{enumerate}
      \item It is a direct consequence of the lemma (\ref{courbure}) that if $\mathcal{R}^{G} = 0,$ then $\mathcal{R}^{TG} = (0,0).$ We now assume that $\mathcal{R}^{TG} = (0,0)$ then for any $\beta,\beta^{'},\beta^{''} \in \mathfrak{g}^{*}$ wa have $$\mathcal{R}^{G}(\beta,\beta^{'})\beta^{''} = 0.$$
      Therefore $\mathcal{D}^{G}$ is flat.
      \item According to lemma (\ref{sym}), if  $\mathcal{D}^{G}\mathcal{R}^{G} = 0,$ then $\mathcal{D}^{TG}\mathcal{R}^{TG} = (0,0).$ Conversely, if $\mathcal{D}^{TG}\mathcal{R}^{TG} = (0,0)$ then for any 
      $\beta,\beta^{'},\beta^{''},\beta^{'''} \in \mathfrak{g}^{*}$ wa have $$\mathcal{D}^{G}_{\beta}\mathcal{R}^{G}(\beta^{'},\beta^{''})\beta^{'''} = 0.$$
      Hence $\mathcal{D}^{G}$ is locally symmetric.

     \end{enumerate}
      \end{proof}
       \begin{lem} \label{generalized koszul}
       Let $[ , ]_{\mathfrak{g}^{*} \ltimes \mathfrak{g}^{*}}$ and $[ , ]_{\mathfrak{g}^{*}}$ be the  generalized Koszul brackets  on $\Omega^{*}(TG)$ and  $\Omega^{*}(G)$ respectively. Then for any
       $(\alpha,\beta), (\alpha^{'},\beta^{'}) \in \mathfrak{g}^{*} \times \mathfrak{g}^{*}$ we have:
       $$[(\alpha,\beta),d(\alpha^{'},\beta^{'})]_{\mathfrak{g}^{*} \ltimes \mathfrak{g}^{*}} = ([\alpha,d\beta^{'}]_{\mathfrak{g}^{*}} + [\beta,d\alpha^{'}]_{\mathfrak{g}^{*}},[\beta,d\beta^{'}]_{\mathfrak{g}^{*}}).$$
       \end{lem}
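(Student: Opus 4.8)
The plan is to reduce the identity to the behaviour of the generalized Koszul bracket under vertical and complete lifts. First I would record the identification, implicit in the metric formula (\ref{e3}), under which a pair $(\alpha,\beta) \in \mathfrak{g}^{*} \times \mathfrak{g}^{*}$ is identified with the left invariant $1$-form $\alpha^{v} + \beta^{c}$ on $TG$; in particular $(\alpha,0) = \alpha^{v}$ and $(0,\beta) = \beta^{c}$ (note the flip relative to the vector field convention $(X,0)=X^{c}$, $(0,X)=X^{v}$). Applying the commutation rules $d(\mu^{v}) = (d\mu)^{v}$ and $d(\mu^{c}) = (d\mu)^{c}$ from (\ref{id1}), the exact $2$-form on the right becomes $d(\alpha^{'},\beta^{'}) = (d\alpha^{'})^{v} + (d\beta^{'})^{c}$, so bilinearity splits the target into four pieces
$$[(\alpha,\beta), d(\alpha^{'},\beta^{'})]_{\mathfrak{g}^{*} \ltimes \mathfrak{g}^{*}} = [\alpha^{v},(d\alpha^{'})^{v}] + [\alpha^{v},(d\beta^{'})^{c}] + [\beta^{c},(d\alpha^{'})^{v}] + [\beta^{c},(d\beta^{'})^{c}],$$
all brackets on the right being the generalized Koszul bracket on $\Omega^{*}(TG)$.

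The key step is to establish, for an arbitrary $1$-form $\alpha$ and an arbitrary form $\omega$ on $G$, the lift identities
$$[\alpha^{v},\omega^{v}]_{TG} = 0, \qquad [\alpha^{v},\omega^{c}]_{TG} = ([\alpha,\omega]_{\mathfrak{g}^{*}})^{v}, \qquad [\alpha^{c},\omega^{c}]_{TG} = ([\alpha,\omega]_{\mathfrak{g}^{*}})^{c},$$
together with the companion $[\alpha^{c},\omega^{v}]_{TG} = ([\alpha,\omega]_{\mathfrak{g}^{*}})^{v}$. For $\omega$ a $1$-form these are just the bracket (\ref{e2}) on $\mathfrak{g}^{*} \ltimes \mathfrak{g}^{*}$ evaluated on left invariant forms: from $(\alpha,0) = \alpha^{v}$ and $(0,\omega) = \omega^{c}$ one reads off $[(\alpha,0),(\omega,0)] = (0,0)$, $[(\alpha,0),(0,\omega)] = ([\alpha,\omega]_{\mathfrak{g}^{*}},0)$ and $[(0,\alpha),(0,\omega)] = (0,[\alpha,\omega]_{\mathfrak{g}^{*}})$, which are exactly the displayed formulas, the companion following by antisymmetry. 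For a decomposable $2$-form $\omega = \gamma \wedge \delta$ I would pass to higher degree using the wedge-lift rules $(\mu \wedge \nu)^{v} = \mu^{v} \wedge \nu^{v}$ and $(\mu \wedge \nu)^{c} = \mu^{c} \wedge \nu^{v} + \mu^{v} \wedge \nu^{c}$ of (\ref{id1}) together with the graded Leibniz identity (\ref{id}). Since the first slot is always the lift of a $1$-form, the degree-shift sign $(-1)^{(\deg\sigma - 1)\deg\upsilon}$ in (\ref{id}) equals $1$; expanding $[\beta^{c},(\gamma\wedge\delta)^{c}]_{TG}$ and the lift of $[\beta,\gamma\wedge\delta]_{\mathfrak{g}^{*}} = [\beta,\gamma]_{\mathfrak{g}^{*}}\wedge\delta + \gamma\wedge[\beta,\delta]_{\mathfrak{g}^{*}}$ term by term, the vertical and complete pieces match, and linearity extends this to all $2$-forms.

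With these identities in hand the four pieces are immediate: $[\alpha^{v},(d\alpha^{'})^{v}] = 0$, $[\alpha^{v},(d\beta^{'})^{c}] = ([\alpha,d\beta^{'}]_{\mathfrak{g}^{*}})^{v}$, $[\beta^{c},(d\alpha^{'})^{v}] = ([\beta,d\alpha^{'}]_{\mathfrak{g}^{*}})^{v}$ and $[\beta^{c},(d\beta^{'})^{c}] = ([\beta,d\beta^{'}]_{\mathfrak{g}^{*}})^{c}$. Summing and translating $(\cdot)^{v} = (\cdot,0)$, $(\cdot)^{c} = (0,\cdot)$ back into pair notation yields
$$[(\alpha,\beta), d(\alpha^{'},\beta^{'})]_{\mathfrak{g}^{*} \ltimes \mathfrak{g}^{*}} = ([\alpha,d\beta^{'}]_{\mathfrak{g}^{*}} + [\beta,d\alpha^{'}]_{\mathfrak{g}^{*}}, [\beta,d\beta^{'}]_{\mathfrak{g}^{*}}),$$
which is the asserted formula.

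The main obstacle is the higher-degree lift identities: one must check that the asymmetric complete-lift rule $(\mu\wedge\nu)^{c} = \mu^{c}\wedge\nu^{v} + \mu^{v}\wedge\nu^{c}$ recombines correctly with the graded Leibniz rule, so that $[\beta^{c},\omega^{c}]_{TG}$ comes out as a pure complete lift while $[\alpha^{v},\omega^{c}]_{TG}$ comes out as a pure vertical lift, with no stray cross terms surviving. The favourable sign (a first argument of degree $1$) keeps this bookkeeping clean, but the argument genuinely relies on the compatibility of the lift operations with both $d$ and $\wedge$ recorded in (\ref{id1}).
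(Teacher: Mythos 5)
Your proposal is correct and follows essentially the same route as the paper: both reduce $d(\alpha',\beta')$ to a sum of decomposable $2$-forms (the paper via local coordinates $\beta'=\sum_i\beta_i'\,dx_i$, you via the wedge-lift rules), then apply the graded Leibniz identity (\ref{id}) together with the degree-one bracket formulas coming from (\ref{e2}) to handle the four vertical/complete cases separately. The only difference is packaging — you isolate the intermediate lift identities $[\alpha^{v},\omega^{v}]=0$, $[\alpha^{v},\omega^{c}]=([\alpha,\omega])^{v}$, $[\alpha^{c},\omega^{c}]=([\alpha,\omega])^{c}$ as a stand-alone step, which the paper carries out inline.
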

       \begin{proof}
        Let $(x_{i})$ are local coordinates of $G$ in a neighborhood of  $e$. We write  $\beta^{'} = \sum \limits_{i} \beta_{i}^{'}dx_{i}$ and $\alpha^{'} = \sum \limits_{i} \alpha_{i}^{'}dx_{i}$. Then using equations
       (\ref{id}), (\ref{id1}) and (\ref{e2}) we obtain:
        $$\begin{array}{rcl}
       [(0,\beta),d(0,\beta^{'})]_{\mathfrak{g}^{*} \ltimes \mathfrak{g}^{*}} &=& \sum \limits_{i} [(0,\beta),d(0,\beta_{i}^{'}dx_{i})]_{\mathfrak{g}^{*} \ltimes \mathfrak{g}^{*}} = \sum \limits_{i} [(0,\beta),(0,d\beta_{i}^{'} \wedge dx_{i})]_{\mathfrak{g}^{*} \ltimes \mathfrak{g}^{*}}\\ &=& \sum \limits_{i} \Big( [(0,\beta),(0,d\beta_{i}^{'}) \wedge (dx_{i},0) + (d\beta_{i}^{'},0) \wedge (0,dx_{i})]_{\mathfrak{g}^{*} \ltimes \mathfrak{g}^{*}} \Big)\\
       &=& \sum \limits_{i} \Big([(0,\beta),(0,d\beta_{i}^{'})]_{\mathfrak{g}^{*} \ltimes \mathfrak{g}^{*}} \wedge (dx_{i},0) + (0,d\beta_{i}^{'}) \wedge [(0,\beta), (dx_{i},0)]_{\mathfrak{g}^{*} \ltimes \mathfrak{g}^{*}}\\ &\quad \quad & + \quad 
       [(0,\beta),(d\beta_{i}^{'},0)]_{\mathfrak{g}^{*} \ltimes \mathfrak{g}^{*}} \wedge (0,dx_{i}) + (d\beta_{i}^{'},0) \wedge [(0,\beta), (0,dx_{i})]_{\mathfrak{g}^{*} \ltimes \mathfrak{g}^{*}} \Big)
       \\ &=& \sum \limits_{i} \Big( (0,[\beta,d\beta_{i}^{'}]_{\mathfrak{g}^{*}}) \wedge (dx_{i},0) + (0,d\beta_{i}^{'}) \wedge ([\beta, dx_{i}]_{\mathfrak{g}^{*}},0) \\ &\quad \quad & + \quad 
       ([\beta,d\beta_{i}^{'}]_{\mathfrak{g}^{*}},0) \wedge (0,dx_{i}) + (d\beta_{i}^{'},0) \wedge (0,[\beta, dx_{i}]_{\mathfrak{g}^{*}})\Big)
       \\&=& \sum \limits_{i} \Big((0,[\beta,d\beta_{i}^{'}]_{\mathfrak{g}^{*}} \wedge dx_{i})+ (0,d\beta_{i}^{'} \wedge [\beta, dx_{i}]_{\mathfrak{g}^{*}}) \Big)\\ &=& \sum \limits_{i}(0,[\beta,d\beta_{i}^{'} \wedge dx_{i}]_{\mathfrak{g}^{*}}) \\&=& (0,[\beta,d\beta^{'}]_{\mathfrak{g}^{*}}).
       \end{array}$$
         $$\begin{array}{rcl}
       [(0,\beta),d(\alpha^{'},0)]_{\mathfrak{g}^{*} \ltimes \mathfrak{g}^{*}} &=& \sum \limits_{i} [(0,\beta),(d\alpha_{i}^{'} \wedge dx_{i},0)]_{\mathfrak{g}^{*} \ltimes \mathfrak{g}^{*}} = \sum \limits_{i} [(0,\beta),(d\alpha_{i}^{'},0) \wedge (dx_{i},0)]_{\mathfrak{g}^{*} \ltimes \mathfrak{g}^{*}}\\
       &=& \sum \limits_{i} \Big([(0,\beta),(d\alpha_{i}^{'},0)]_{\mathfrak{g}^{*} \ltimes \mathfrak{g}^{*}} \wedge (dx_{i},0) + (d\alpha_{i}^{'},0) \wedge [(0,\beta), (dx_{i},0)]_{\mathfrak{g}^{*} \ltimes \mathfrak{g}^{*}} \Big)
       \\ &=& \sum \limits_{i} \Big(([\beta,d\alpha_{i}^{'}]_{\mathfrak{g}^{*}},0) \wedge (dx_{i},0) + (d\alpha_{i}^{'},0) \wedge ([\beta, dx_{i}]_{\mathfrak{g}^{*}},0) \Big) 
       \\ &=& \sum \limits_{i} ([\beta,d \alpha_{i}^{'} \wedge dx_{i}]_{\mathfrak{g}^{*}},0) \\&=& ([\beta,d\alpha^{'}]_{\mathfrak{g}^{*}},0).
       \end{array}$$
         $$\begin{array}{rcl}
       [(\alpha,0),d(0,\beta^{'})]_{\mathfrak{g}^{*} \ltimes \mathfrak{g}^{*}} &=& \sum \limits_{i} [(\alpha,0),(0,d\beta_{i}^{'} \wedge dx_{i})]_{\mathfrak{g}^{*} \ltimes \mathfrak{g}^{*}}\\ &=& \sum \limits_{i} [(\alpha,0),(0,d\beta_{i}^{'}) \wedge (dx_{i},0) + (d\beta_{i}^{'},0) \wedge (0,dx_{i})]_{\mathfrak{g}^{*} \ltimes \mathfrak{g}^{*}}\\
       &=& \sum \limits_{i} \Big([(\alpha,0),(0,d\beta_{i}^{'})]_{\mathfrak{g}^{*} \ltimes \mathfrak{g}^{*}} \wedge (dx_{i},0) + (0,d\beta_{i}^{'}) \wedge [(\alpha,0), (dx_{i},0)]_{\mathfrak{g}^{*} \ltimes \mathfrak{g}^{*}}\\ &\quad \quad & + \quad  
       [(\alpha,0),(d\beta_{i}^{'},0)]_{\mathfrak{g}^{*} \ltimes \mathfrak{g}^{*}} \wedge (0,dx_{i}) + (d\beta_{i}^{'},0) \wedge [(\alpha,0), (0,dx_{i})]_{\mathfrak{g}^{*} \ltimes \mathfrak{g}^{*}} \Big)
       \\ &=& \sum \limits_{i} \Big(([\alpha,d\beta_{i}^{'}]_{\mathfrak{g}^{*}},0) \wedge (dx_{i},0) + (d\beta_{i}^{'},0) \wedge ([\alpha, dx_{i}]_{\mathfrak{g}^{*}},0) \Big)
        \\&=& ([\alpha,d\beta^{'}]_{\mathfrak{g}^{*}},0).
       \end{array}$$
       $$\begin{array}{rcl}
       [(\alpha,0),d(\alpha^{'},0)]_{\mathfrak{g}^{*} \ltimes \mathfrak{g}^{*}} &=& \sum \limits_{i}  [(\alpha,0),(d\alpha_{i}^{'} \wedge dx_{i},0)]_{\mathfrak{g}^{*} \ltimes \mathfrak{g}^{*}} = \sum \limits_{i}  [(\alpha,0),(d\alpha_{i}^{'},0) \wedge (dx_{i},0)]_{\mathfrak{g}^{*} \ltimes \mathfrak{g}^{*}}\\
       &=& \sum \limits_{i} \Big( [(\alpha,0),(d\alpha_{i}^{'},0)]_{\mathfrak{g}^{*} \ltimes \mathfrak{g}^{*}} \wedge (dx_{i},0) + (d\alpha_{i}^{'},0) \wedge [(\alpha,0), (dx_{i},0)]_{\mathfrak{g}^{*} \ltimes \mathfrak{g}^{*}} \Big)
       \\ &=& 0.
       \end{array}$$
       \end{proof}
       \begin{prop} \label{generalized}
       Let $\{ , \}_{TG}$ and $\{ , \}_{G}$ be the Hawkins generalized pre-Poisson brackets  of the Levi-Civita contravariant connections $\mathcal{D}^{TG}$ and  $\mathcal{D}^{G}$ respectively.
       Then for any $(\alpha,\beta),(\alpha^{'},\beta^{'}) \in \mathfrak{g}^{*} \times \mathfrak{g}^{*}$ we have :
      $$\{(\alpha,\beta),(\alpha^{'},\beta^{'})\}_{TG} = (\{\alpha,\beta^{'}\}_{G} + \{\beta,\alpha^{'}\}_{G}, \{\beta,\beta^{'}\}_{G} ).$$
       \end{prop}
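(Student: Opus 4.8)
The plan is to apply the explicit formula~(\ref{bracket}) for the Hawkins generalized pre-Poisson bracket,
\[
\{\sigma,\tau\}_{M} = -\mathcal{D}^{M}_{\sigma}d\tau - \mathcal{D}^{M}_{\tau}d\sigma + d\mathcal{D}^{M}_{\tau}\sigma + [\sigma,d\tau]_{M},
\]
on the group $TG$ with $M = TG$, $\sigma = (\alpha,\beta)$ and $\tau = (\alpha',\beta')$, and then reduce every one of the four terms on the right to data on $G$ using the structural lemmas already proved. Concretely, I would write
\[
\{(\alpha,\beta),(\alpha',\beta')\}_{TG} = -\mathcal{D}^{TG}_{(\alpha,\beta)}d(\alpha',\beta') - \mathcal{D}^{TG}_{(\alpha',\beta')}d(\alpha,\beta) + d\,\mathcal{D}^{TG}_{(\alpha',\beta')}(\alpha,\beta) + [(\alpha,\beta),d(\alpha',\beta')]_{\mathfrak{g}^{*}\ltimes\mathfrak{g}^{*}}
\]
and treat the four summands in turn. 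The fourth term is handled \emph{verbatim} by Lemma~(\ref{generalized koszul}), which already gives
\[
[(\alpha,\beta),d(\alpha',\beta')]_{\mathfrak{g}^{*}\ltimes\mathfrak{g}^{*}} = \bigl([\alpha,d\beta']_{\mathfrak{g}^{*}} + [\beta,d\alpha']_{\mathfrak{g}^{*}},\,[\beta,d\beta']_{\mathfrak{g}^{*}}\bigr).
\]

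For the three connection terms, the key observation is that the complete connection $\mathcal{D}^{TG}$ of Proposition~(\ref{cnxcont}) must be applied not to $1$-forms but to the $2$-forms $d(\alpha',\beta')$, so I first need the action of $\mathcal{D}^{TG}$ on differential forms of mixed degree. The strategy is to expand in the local coordinates $(x_i)$ exactly as in the proof of Lemma~(\ref{generalized koszul}): writing $\alpha' = \sum_i \alpha'_i dx_i$ and $\beta' = \sum_i \beta'_i dx_i$, one has $d(\alpha',\beta') = \sum_i (0,d\beta'_i\wedge dx_i) + (d\alpha'_i\wedge dx_i,0)$ after splitting via the identities~(\ref{id1}) for wedge products of lifts. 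Using property~4 of the Hawkins bracket, namely $\{f,\sigma\}_{M} = \mathcal{D}^{M}_{df}\sigma$, together with the derivation rule~(\ref{id}) and the formula of Proposition~(\ref{cnxcont}), each term $\mathcal{D}^{TG}_{(\alpha,\beta)}(\cdots)$ collapses onto the $G$-level covariant derivatives $\mathcal{D}^{G}_{\alpha}$ and $\mathcal{D}^{G}_{\beta}$ acting on the appropriate components, the pattern being dictated by the semidirect structure: the $\alpha$-slot pairs only with $\beta'$-type data and the $\beta$-slot pairs with both.

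\textbf{The main obstacle} I anticipate is bookkeeping rather than conceptual: one must correctly track how the complete-lift splitting distributes $\alpha$, $\beta$, $\alpha'$, $\beta'$ across the two components of $\mathfrak{g}^{*}\ltimes\mathfrak{g}^{*}$, and verify that the cross-terms reassemble precisely into $\{\alpha,\beta'\}_G + \{\beta,\alpha'\}_G$ in the first slot and $\{\beta,\beta'\}_G$ in the second. The cleanest route is to assemble the four computed contributions and recognize, slot by slot, the very formula~(\ref{bracket}) \emph{for $G$}: in the first component the terms $-\mathcal{D}^{G}_{\alpha}d\beta' - \mathcal{D}^{G}_{\beta'}d\alpha + d\mathcal{D}^{G}_{\beta'}\alpha + [\alpha,d\beta']_{\mathfrak{g}^{*}}$ recombine into $\{\alpha,\beta'\}_G$ and the symmetric $\beta,\alpha'$ terms into $\{\beta,\alpha'\}_G$, while in the second component the $\beta,\beta'$ terms assemble into $\{\beta,\beta'\}_G$; the $(\alpha,\cdot)$-to-first-slot-only constraint forces all $\alpha$-with-$\alpha'$ contributions to cancel, consistent with the vanishing computed in the last display of Lemma~(\ref{generalized koszul}). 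This identification of the reassembled $G$-level Koszul formula is the one step I would write out with care; everything else is the mechanical substitution already rehearsed in the preceding lemma.
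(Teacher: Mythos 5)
Your proposal is correct and follows essentially the same route as the paper: apply formula (\ref{bracket}) on $TG$, handle the Koszul term via Lemma (\ref{generalized koszul}), extend $\mathcal{D}^{TG}$ to $\Omega^{2}(TG)$ so that the component formula of Proposition (\ref{cnxcont}) still applies to the terms involving $d(\alpha',\beta')$, and reassemble slot by slot into the $G$-level Koszul formula. The only difference is one of emphasis: the paper simply asserts that the connections "naturally extend" to $2$-forms and substitutes directly, whereas you spell out the local-coordinate justification of that extension, which is a reasonable elaboration rather than a different argument.
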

       \begin{proof}
      Note that the Levi-Civita contravariant connections $\mathcal{D}^{G}$ and $\mathcal{D}^{TG}$ naturally extends to $\Omega^{2}(G)$ and $\Omega^{2}(TG)$ respectively.
       Using equation (\ref{bracket}), proposition (\ref{cnxcont}) and lemma (\ref{generalized koszul}) we obtain,
        $$ \begin{array}{rcl}
     \{(\alpha,\beta),(\alpha^{'},\beta^{'})\}_{TG} &=& -\mathcal{D}^{TG}_{(\alpha,\beta)}d(\alpha^{'},\beta^{'}) -\mathcal{D}^{TG}_{(\alpha^{'},\beta^{'})}d(\alpha,\beta) + d \mathcal{D}^{TG}_{(\alpha^{'},\beta^{'})}(\alpha,\beta) + [(\alpha,\beta),d(\alpha^{'},\beta^{'})]_{\mathfrak{g}^{*} \ltimes \mathfrak{g}^{*}} \\ &=& -(\mathcal{D}^{G}_{\alpha}d\beta^{'} + \mathcal{D}^{G}_{\beta}d\alpha^{'},\mathcal{D}^{G}_{\beta}d\beta^{'}) -(\mathcal{D}^{G}_{\alpha^{'}}d\beta + \mathcal{D}^{G}_{\beta^{'}}d\alpha, \mathcal{D}^{G}_{\beta^{'}}d\beta) \\ &\quad \quad & + \quad (d\mathcal{D}^{G}_{\alpha^{'}}\beta + d\mathcal{D}^{G}_{\beta^{'}}\alpha,d\mathcal{D}^{G}_{\beta^{'}}\beta) + ([\alpha,d\beta^{'}]_{\mathfrak{g}^{*}} + [\beta,d\alpha^{'}]_{\mathfrak{g}^{*}},[\beta,d\beta^{'}]_{\mathfrak{g}^{*}}) \\ &=& \Big(-\mathcal{D}^{G}_{\alpha}d\beta^{'} - \mathcal{D}^{G}_{\beta}d\alpha^{'} - \mathcal{D}^{G}_{\alpha^{'}}d\beta - \mathcal{D}^{G}_{\beta^{'}}d\alpha + d\mathcal{D}^{G}_{\alpha^{'}}\beta + d\mathcal{D}^{G}_{\beta^{'}}\alpha\\ &\quad \quad & + \quad  [\alpha,d\beta^{'}]_{\mathfrak{g}^{*}} + [\beta,d\alpha^{'}]_{\mathfrak{g}^{*}} , -\mathcal{D}^{G}_{\beta}d\beta^{'} -\mathcal{D}^{G}_{\beta^{'}}d\beta + d\mathcal{D}^{G}_{\beta^{'}}\beta + [\beta,d\beta^{'}]_{\mathfrak{g}^{*}} \Big) \\ &=& \Big(\{\alpha,\beta^{'}\}_{G} + \{\beta,\alpha^{'}\}_{G}, \{\beta,\beta^{'}\}_{G} \Big).
    \end{array}$$
       \end{proof}
       \begin{lem} \label{metacurv}
     Let $\mathcal{M}^{G}$ and $\mathcal{M}^{TG}$ be the metacurvatures of the Levi-Civita contravariant connections $\mathcal{D}^{G}$ and $\mathcal{D}^{TG}$ respectively.
     Then for any $(\alpha,\beta),(\alpha^{'},\beta^{'}),(\alpha^{''},\beta^{''}) \in \mathfrak{g}^{*} \times \mathfrak{g}^{*}$ we have:
     \begin{enumerate}
     \item $\mathcal{M}^{TG}((\alpha,0),(\alpha^{'},0),(\alpha^{''},0)) = 0,$
     \item $\mathcal{M}^{TG}((\alpha,0),(\alpha^{'},0),(0,\beta^{''})) = 0,$
     \item  $ \mathcal{M}^{TG}((\alpha,0),(0,\beta^{'}),(0,\beta^{''}))   = (\mathcal{M}^{G}(\alpha,\beta^{'},\beta^{''}),0)$
     \item $\mathcal{M}^{TG}((0,\beta),(0,\beta^{'}),(0,\beta^{''})) = (0,\mathcal{M}^{G}(\beta,\beta^{'},\beta^{''})),$
     \item $\mathcal{M}^{TG}((0,\beta),(0,\beta^{'}),(\alpha^{''},0)) = (\mathcal{M}^{G}(\beta,\beta^{'},\alpha^{''}),0),$
     \item $\mathcal{M}^{TG}((0,\beta),(\alpha^{'},0),(\alpha^{''},0)) = 0.$
     \end{enumerate}
         
       \end{lem}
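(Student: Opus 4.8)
The plan is to reduce the whole statement to one structural fact — that the generalized pre-Poisson bracket on $TG$ is the \emph{complete lift} of the one on $G$ — and then read off the six cases from the definition \eqref{equa2.3}. First I would make explicit the identification that is already implicit in Lemma \ref{generalized koszul}: as forms on $TG$ one has $(\alpha,0)=\alpha^{v}$ and $(0,\beta)=\beta^{c}$, the vertical and complete lifts (indeed, this is exactly what makes the wedge expansions in that proof work, via the identities \eqref{id1}). Since $d(f^{v})=(df)^{v}=(df,0)$ and $d(f^{c})=(df)^{c}=(0,df)$ by \eqref{id1}, a vertical (resp. complete) left-invariant form in the first slot of $\mathcal{M}^{TG}$ is exact, equal to $d(f^{v})$ (resp. $d(f^{c})$) for a suitable $f$ on $G$. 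As $\mathcal{M}^{TG}$ is a tensor it suffices to test it on such exact first entries, and then the defining property $\{F,\sigma\}_{TG}=\mathcal{D}^{TG}_{dF}\sigma$ turns \eqref{equa2.3} into
\begin{equation*}
\mathcal{M}^{TG}(dF,\sigma,\upsilon)=\mathcal{D}^{TG}_{dF}\{\sigma,\upsilon\}_{TG}-\{\mathcal{D}^{TG}_{dF}\sigma,\upsilon\}_{TG}-\{\mathcal{D}^{TG}_{dF}\upsilon,\sigma\}_{TG}.
\end{equation*}

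The engine of the argument is to upgrade Proposition \ref{generalized} from $1$-forms to all of $\Omega^{*}(TG)$, establishing the complete-lift rule
\begin{equation*}
\{\mu^{v},\nu^{v}\}_{TG}=0,\qquad \{\mu^{c},\nu^{v}\}_{TG}=(\{\mu,\nu\}_{G})^{v},\qquad \{\mu^{c},\nu^{c}\}_{TG}=(\{\mu,\nu\}_{G})^{c}.
\end{equation*}
I would prove this by propagating from degree $\le 1$ to all degrees: on functions it is the defining property of the tangent Poisson structure, on $1$-forms it is Proposition \ref{generalized} rewritten in lift notation, and the product rule together with the fact that $d$ is a derivation for $\{\,,\}_{TG}$ then force it in general, because the lift identities \eqref{id1} ($(\mu\wedge\nu)^{c}=\mu^{c}\wedge\nu^{v}+\mu^{v}\wedge\nu^{c}$, $(\mu\wedge\nu)^{v}=\mu^{v}\wedge\nu^{v}$, $d(\mu^{c})=(d\mu)^{c}$, $d(\mu^{v})=(d\mu)^{v}$) keep products and exterior derivatives of lifts within the lift classes with the matching bookkeeping. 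Proposition \ref{cnxcont} yields the identical complete-lift rule for $\mathcal{D}^{TG}$ acting on lifted forms, namely $\mathcal{D}^{TG}_{\mu^{v}}\nu^{v}=0$, $\mathcal{D}^{TG}_{\mu^{v}}\nu^{c}=\mathcal{D}^{TG}_{\mu^{c}}\nu^{v}=(\mathcal{D}^{G}_{\mu}\nu)^{v}$ and $\mathcal{D}^{TG}_{\mu^{c}}\nu^{c}=(\mathcal{D}^{G}_{\mu}\nu)^{c}$, so both building blocks of the displayed metacurvature obey the same rule. The crucial point is that a \emph{second} vertical factor always produces a term of type $(\cdot)^{v}\wedge(\cdot)^{v}$ or a doubly-vertical bracket, which vanishes — the ``$\varepsilon^{2}=0$'' of the dual-number (tangent) structure.

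Granting the complete-lift rule, the six cases are immediate. Feeding the lifts into the displayed formula and noting that $\mathcal{M}^{G}$ is assembled from $\{\,,\}_{G}$ and $\mathcal{D}^{G}$ by the \emph{same} expression shows that $\mathcal{M}^{TG}$ is the complete lift of $\mathcal{M}^{G}$: each vertical entry carries one factor ``$\varepsilon$'', each complete entry carries none, and $\varepsilon^{2}=0$. Hence any configuration with two or more vertical entries vanishes, giving cases (1), (2) and (6); exactly one vertical entry gives $(\mathcal{M}^{G}(\cdots))^{v}=(\mathcal{M}^{G}(\cdots),0)$, giving cases (3) and (5); and the all-complete configuration gives $(\mathcal{M}^{G}(\cdots))^{c}=(0,\mathcal{M}^{G}(\cdots))$, giving case (4). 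The main obstacle is the inductive extension of the bracket rule to $\Omega^{*}(TG)$, where one must track the exterior-derivative contributions and the graded signs with care; the alternative is a direct but lengthy case-by-case expansion of \eqref{equa2.3} using Propositions \ref{cnxcont}, \ref{generalized} and Lemma \ref{generalized koszul}, which is purely mechanical but forgoes the clean structural statement.
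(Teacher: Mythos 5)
Your proposal is correct and reaches all six identities, but it organizes the argument differently from the paper. The paper's proof is exactly the ``direct but lengthy case-by-case expansion'' you mention at the end: it writes $\alpha=\sum_i\alpha_i dx_i$, uses tensoriality in the first slot to reduce to the exact entries $d(x_i\circ\pi)=(dx_i,0)$ and $d(x_i\circ\pi_1)=(0,dx_i)$, and then evaluates every term of \eqref{equa2.3} separately in each of the six configurations using the component formulas of Propositions \ref{cnxcont} and \ref{generalized}; the vanishing cases come out because each surviving term has the form $\mathcal{D}^{TG}_{(\cdot,0)}(\cdot,0)$ or $\{(\cdot,0),(\cdot,0)\}_{TG}$, both zero. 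Your route isolates the structural reason behind these cancellations: under the identification $(\alpha,0)=\alpha^{v}$, $(0,\beta)=\beta^{c}$ (which is indeed the convention forced by \eqref{e3} and by the wedge expansions in the proof of Lemma \ref{generalized koszul}), Propositions \ref{cnxcont} and \ref{generalized} say that $\mathcal{D}^{TG}$ and $\{\,,\}_{TG}$ obey the multiplication table $v\cdot v=0$, $v\cdot c=v$, $c\cdot c=c$, and the six cases are just the trilinear instances of that table; I checked that this bookkeeping reproduces each of (1)--(6). What your version buys is uniformity and a transparent explanation of the pattern (it even yields in one line the combined formula the paper only assembles afterwards in the proof of the theorem following this lemma). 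What it costs is the auxiliary step extending the bracket rule from left-invariant $1$-forms to lifted forms of arbitrary degree and non-constant coefficients, which you only sketch; if you write this up you should carry out that induction at least for the pairings of a function or $1$-form against a $1$- or $2$-form, which are the only ones occurring in \eqref{equa2.3}. Note that the paper implicitly relies on a fragment of the same extension (its terms such as $\{(\mathcal{D}^{G}_{dx_i}\beta^{''},0),(\alpha^{'},0)\}_{TG}$ have non-left-invariant arguments), so this is a debt shared by both approaches rather than a gap peculiar to yours.
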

       \begin{proof}
      Note that the tangent Lie group $TG$ is isomorphic to $G \times \mathfrak{g}$. Then we denote by 
       $\pi : G \times \mathfrak{g} \rightarrow G ; (x,Y) \mapsto x$ and $\pi_{1} : G \times \mathfrak{g} \rightarrow \mathfrak{g} ; (x,Y) \mapsto Y$ the projection maps.  Let $\pi^{*} : \mathfrak{g}^{*}  \rightarrow \mathfrak{g}^{*} \times \mathfrak{g}^{*} ; \alpha \mapsto (\alpha,0)$ and $\pi_{1}^{*} : \mathfrak{g}^{*} \rightarrow \mathfrak{g}^{*} \times \mathfrak{g}^{*} ; \beta \mapsto (0,\beta)$ the transposed of tangent maps at point $(e,0)$ of $\pi$ and $\pi_{1}$ respectively.\\
       Using equation (\ref{equa2.3}) and propositions (\ref{cnxcont}) and (\ref{generalized}), then locally for any $\alpha = \sum \limits_{i} \alpha_{i} dx_{i}$ and  $\beta = \sum \limits_{i} \beta_{i}  dx_{i}$ we obtain:
       \begin{enumerate}
       \item 
       $$\begin{array}{rcl}
       \mathcal{M}^{TG}((\alpha,0),(\alpha^{'},0),(\alpha^{''},0)) &=& \sum \limits_{i} \alpha_{i} \Big(\{ x_{i} \circ \pi,\{(\alpha^{'},0),(\alpha^{''},0)\}_{TG}\}_{TG}\\ &-& \{\{ x_{i} \circ \pi,(\alpha^{'},0)\}_{TG},(\alpha^{''},0)\}_{TG} - \{\{x_{i} \circ \pi,(\alpha^{''},0)\}_{TG},(\alpha^{'},0)\}_{TG}\Big)\\ &=& - \sum \limits_{i} \alpha_{i} \Big(\{\mathcal{D}^{TG}_{(dx_{i},0)} (\alpha^{'},0),(\alpha^{''},0)\}_{TG} - \{\mathcal{D}^{TG}_{(dx_{i},0)}(\alpha^{''},0),(\alpha^{'},0)\}_{TG} \Big) \\&=& 0.
       \end{array}$$
     \item   $$\begin{array}{rcl}
       \mathcal{M}^{TG}((\alpha,0),(\alpha^{'},0),(0,\beta^{''})) &=& \sum \limits_{i} \alpha_{i} \Big(\{ x_{i} \circ \pi,\{(\alpha^{'},0),(0,\beta^{''})\}_{TG}\}_{TG}\\ &-& \{\{ x_{i} \circ \pi,(\alpha^{'},0)\}_{TG},(0,\beta^{''})\}_{TG} - \{\{x_{i} \circ \pi_{1},(0,\beta^{''})\}_{TG},(\alpha^{'},0)\}_{TG}\Big)\\ &=& \sum \limits_{i} \alpha_{i} \Big( \mathcal{D}^{TG}_{(dx_{i},0)}(\{\alpha^{'},\beta^{''}\}_{G},0) - \{\mathcal{D}^{TG}_{(dx_{i},0)} (\alpha^{'},0),(0,\beta^{''})\}_{TG}\\ &\quad & - \quad  \{\mathcal{D}^{TG}_{(dx_{i},0)}(0,\beta^{''}),(\alpha^{'},0)\}_{TG} \Big)\\ &=& \sum \limits_{i} \alpha_{i} \Big( \{(\mathcal{D}^{G}_{dx_{i}}\beta^{''},0),(\alpha^{'},0)\}_{TG}\Big) \\&=& 0.
       \end{array}$$
       \item  $$\begin{array}{rcl}
       \mathcal{M}^{TG}((\alpha,0),(0,\beta^{'}),(0,\beta^{''})) &=& \sum \limits_{i} \alpha_{i} \Big(\{ x_{i} \circ \pi,\{(0,\beta^{'}),(0,\beta^{''})\}_{TG}\}_{TG}\\ &-& \{\{ x_{i} \circ \pi_{1},(0,\beta^{'})\}_{TG},(0,\beta^{''})\}_{TG} - \{\{x_{i} \circ \pi,(0,\beta^{''})\}_{TG},(0,\beta^{'})\}_{TG}\Big)\\ &=& \sum \limits_{i} \alpha_{i} \Big( \mathcal{D}^{TG}_{(dx_{i},0)}(0,\{\beta^{'},\beta^{''}\}_{G}) - \{\mathcal{D}^{TG}_{(dx_{i},0)} (0,\beta^{'}),(0,\beta^{''})\}_{TG}\\ &-& \{\mathcal{D}^{TG}_{(dx_{i},0)}(0,\beta^{''}),(0,\beta^{'})\}_{TG} \Big)\\ &=& \sum \limits_{i} \alpha_{i} \Big((\mathcal{D}^{G}_{dx_{i}}\{\beta^{'},\beta^{''}\}_{G},0) - (\{\mathcal{D}^{G}_{dx_{i}}\beta^{'},\beta^{''}\}_{G},	0) \\
&-& (\{\mathcal{D}^{G}_{dx_{i}}\beta^{''},\beta^{'}\}_{G},0) \Big)\\ &=&
    \sum \limits_{i} \alpha_{i} \Big((\{x_{i},\{\beta^{'},\beta^{''}\}_{G}\}_{G},0) - (\{x_{i},\beta^{'}\}_{G},\beta^{''} \}_{G},0)\\ &-& (\{x_{i},\beta^{''}\}_{G},\beta^{'}\}_{G},0) \Big)\\  &=& (\mathcal{M}^{G}(\alpha,\beta^{'},\beta^{''}),0).   
    \end{array}$$
     \item $$\begin{array}{rcl}
\mathcal{M}^{TG}((0,\beta),(0,\beta^{'}),(0,\beta^{''})) &=& \sum \limits_{i} \beta_{i} \Big(\{ x_{i} \circ \pi_{1}, \{ (0,\beta^{'}), (0,\beta^{''})\}_{TG}\}_{TG}\\
&\quad \quad & - \quad \{\{x_{i} \circ \pi_{1} ,(0,\beta^{'}) \}_{TG} , (0,\beta^{''})\}_{TG}\\ &\quad \quad & - \quad \{ \{x_{i} \circ \pi_{1} , (0,\beta^{''})\}_{TG}, (0,\beta^{'})\}_{TG}\Big)\\ &=& \sum \limits_{i} \beta_{i}\Big(\mathcal{D}^{TG}_{(0,dx_{i})} (0,\{\beta^{'},\beta^{''}\}_{G}) - \{\mathcal{D}^{TG}_{(0,dx_{i})}(0,\beta^{'}),(0,\beta^{''}) \}_{TG}\\ &\quad \quad & - \quad \{\mathcal{D}^{TG}_{(0,dx_{i})}(0,\beta^{''}),(0,\beta^{'}) \}_{TG} \Big)\\ &=& \sum \limits_{i} \beta_{i} \Big(
(0,\mathcal{D}^{G}_{dx_{i}}\{\beta^{'},\beta^{''}\}_{G}) - \{(0,\mathcal{D}^{G}_{dx_{i}}\beta^{'}),(0,\beta^{''}) \}_{TG}\\ &\quad \quad & - \quad
  \{(0,\mathcal{D}^{G}_{dx_{i}}\beta^{''}),(0,\beta^{'}) \}_{TG} \Big)\\ &=& \sum \limits_{i} \beta_{i} \Big(
  (0,\mathcal{D}^{G}_{dx_{i}}\{\beta^{'},\beta^{''}\}_{G}) - (0,\{\mathcal{D}^{G}_{dx_{i}}\beta^{'},\beta^{''}\})\\ &\quad \quad & - \quad (0,\{\mathcal{D}^{G}_{dx_{i}}\beta^{''},\beta^{'}\}_{G}) \Big)\\
&=& \sum \limits_{i} \beta_{i} \Big((0,\{ x_{i}, \{ \beta^{'}, \beta^{''}\}_{G}\}_{G}
 - \{\{x_{i} ,\beta^{'} \}_{G} , \beta^{''}\}_{G}\\ &\quad \quad & - \quad \{ \{x_{i} , \beta^{''}\}_{G}, \beta^{'}\}_{G}) \Big)\\ &=& \sum \beta_{i}(0,\mathcal{M}^{G}(dx_{i},\beta^{'},\beta^{''})) \\&=&  (0,\mathcal{M}^{G}(\beta,\beta^{'},\beta^{''})).
\end{array}$$
\item $$\begin{array}{rcl}
\mathcal{M}^{TG}((0,\beta),(0,\beta^{'}),(\alpha^{''},0)) &=& \sum \limits_{i} \beta_{i} \Big(\{ x_{i} \circ \pi_{1}, \{ (0,\beta^{'}), (\alpha^{''},0)\}_{TG}\}_{TG}\\
&\quad \quad & - \quad \{\{x_{i} \circ \pi_{1} ,(0,\beta^{'}) \}_{TG} , (\alpha^{''},0)\}_{TG}\\ &\quad \quad & - \quad \{ \{x_{i} \circ \pi_{1} , (\alpha^{''},0)\}_{TG}, (0,\beta^{'})\}_{TG}\Big)\\ &=& \sum \limits_{i} \beta_{i}\Big(\mathcal{D}^{TG}_{(0,dx_{i})} (\{\beta^{'},\alpha^{''}\}_{G},0) - \{\mathcal{D}^{TG}_{(0,dx_{i})}(0,\beta^{'}),(\alpha^{''},0) \}_{TG}\\ &\quad \quad & - \quad \{\mathcal{D}^{TG}_{(0,dx_{i})}(\alpha^{''},0),(0,\beta^{'}) \}_{TG} \Big)\\ &=& \sum \limits_{i} \beta_{i} \Big(
(\mathcal{D}^{G}_{dx_{i}}\{\beta^{'},\alpha^{''}\}_{G},0) - \{(0,\mathcal{D}^{G}_{dx_{i}}\beta^{'}),(\alpha^{''},0) \}_{TG}\\ &\quad \quad & - \quad
  \{(\mathcal{D}^{G}_{dx_{i}}\alpha^{''},0),(0,\beta^{'}) \}_{TG} \Big)\\ &=& \sum \limits_{i} \beta_{i} \Big(
  (\mathcal{D}^{G}_{dx_{i}}\{\beta^{'},\alpha^{''}\}_{G},0) - (\{\mathcal{D}^{G}_{dx_{i}}\beta^{'},\alpha^{''}\}_{G},0)\\ &\quad \quad & - \quad (\{\mathcal{D}^{G}_{dx_{i}}\alpha^{''},\beta^{'}\}_{G},0) \Big)\\
&=& \sum \beta_{i} \Big((\{ x_{i}, \{ \beta^{'}, \alpha^{''}\}_{G}\}_{G}
 - \{\{x_{i} ,\beta^{'} \}_{G} , \alpha^{''}\}_{G}\\ &\quad \quad & - \quad \{ \{x_{i} , \alpha^{''}\}_{G}, \beta^{'}\}_{G},0) \Big)\\ &=& \sum \limits_{i} \beta_{i}(\mathcal{M}^{G}(dx_{i},\beta^{'},\alpha^{''}),0) \\&=&  (\mathcal{M}^{G}(\beta,\beta^{'},\alpha^{''}),0).
\end{array}$$
\item $$\begin{array}{rcl}
\mathcal{M}^{TG}((0,\beta),(\alpha^{'},0),(\alpha^{''},0)) &=& \sum \limits_{i} \beta_{i} \Big(\{ x_{i} \circ \pi_{1}, \{ (\alpha^{'},0), (\alpha^{''},0)\}_{TG}\}_{TG}\\
&\quad \quad & - \quad \{\{x_{i} \circ \pi_{1} ,(\alpha^{'},0) \}_{TG} , (\alpha^{''},0)\}_{TG}\\ &\quad \quad & - \quad \{ \{x_{i} \circ \pi_{1} , (\alpha^{''},0)\}_{TG}, (\alpha^{'},0)\}_{TG}\Big)\\ &=& \sum \limits_{i} \beta_{i}\Big( - \{\mathcal{D}^{TG}_{(0,dx_{i})}(\alpha^{'},0),(\alpha^{''},0) \}_{TG}\\ &\quad \quad & - \quad \{\mathcal{D}^{TG}_{(0,dx_{i})}(\alpha^{''},0),(\alpha^{'},0) \}_{TG} \Big)\\ &=& \sum \limits_{i} \beta_{i} \Big(
 - \{(\mathcal{D}^{G}_{dx_{i}}\alpha^{'},0),(\alpha^{''},0) \}_{TG}\\ &\quad \quad & - \quad
  \{(\mathcal{D}^{G}_{dx_{i}}\alpha^{''},0),(\alpha^{'},0) \}_{TG} \Big)\\ &=& 0.
\end{array}$$
\end{enumerate}
       \end{proof}
       \begin{thm}
       Let $(G,\Pi_{G},g)$ be a Poisson-Lie group equipped with the left invariant contravariant pseudo-Riemannian metric $g$ and $(TG,\Pi_{TG},g^{c})$  the Sanchez de Alvarez tangent Poisson-Lie group of $G$
     equipped with the complete left invariant  pseudo-Riemannian metric $g^{c}$ associated to $g$.
   Then:
   \begin{enumerate}
   \item The Levi-Civita contravariant connection $\mathcal{D}^{G}$ is metaflat if, and only if, $\mathcal{D}^{TG}$ is metaflat.
   \item  the bracket $\{ , \}_{G}$ is a generalized Poisson bracket on $\Omega^{*}(M)$ if, and only if, 
       $\{ , \}_{TG}$ is a generalized Poisson bracket on $\Omega^{*}(TG)$.
       \end{enumerate}
      \end{thm}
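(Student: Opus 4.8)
The plan is to read off both equivalences directly from the componentwise identities of Lemma~\ref{metacurv}, exploiting that the metacurvature is a tensor (hence $\mathbb{R}$-multilinear), that it is symmetric in its last two arguments by its very definition~(\ref{equa2.3}), and that all the data on $G$ are left-invariant. First I would observe that, since every element of $\mathfrak{g}^{*} \times \mathfrak{g}^{*}$ splits as $(\alpha,\beta) = (\alpha,0) + (0,\beta)$, expanding each of the three arguments of $\mathcal{M}^{TG}$ into its complete part $(\cdot,0)$ and its vertical part $(0,\cdot)$ reduces the evaluation of $\mathcal{M}^{TG}$, by multilinearity, to the eight pure combinations. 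The symmetry of $\mathcal{M}^{TG}$ in its last two slots collapses these to exactly the six types $(c;cc),(c;cv),(c;vv),(v;cc),(v;cv),(v;vv)$ computed in Lemma~\ref{metacurv}, so that $\mathcal{M}^{TG}$ is entirely determined by $\mathcal{M}^{G}$.

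For the forward implication of~(1), if $\mathcal{D}^{G}$ is metaflat then $\mathcal{M}^{G}=0$, so cases (3), (4), (5) of Lemma~\ref{metacurv} reduce to $(0,0)$ while cases (1), (2), (6) already vanish; by the multilinear reduction above this yields $\mathcal{M}^{TG}\equiv 0$, i.e. $\mathcal{D}^{TG}$ is metaflat. For the converse, if $\mathcal{D}^{TG}$ is metaflat then case (4) gives $(0,\mathcal{M}^{G}(\beta,\beta^{'},\beta^{''}))=(0,0)$ for all $\beta,\beta^{'},\beta^{''}\in\mathfrak{g}^{*}$, so $\mathcal{M}^{G}$ vanishes on left-invariant $1$-forms; since $\mathcal{M}^{G}$ is a left-invariant tensor, this forces $\mathcal{M}^{G}\equiv 0$ on $\Omega^{*}(G)$, whence $\mathcal{D}^{G}$ is metaflat. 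Either of cases (3) or (5) would serve equally well here.

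Finally, statement~(2) is immediate from~(1) together with the preliminaries: by the result of Hawkins recalled in Section~2, the generalized pre-Poisson bracket satisfies the graded Jacobi identity --- equivalently, is a genuine generalized Poisson bracket making $\Omega^{*}$ a differential graded Poisson algebra --- if and only if the associated connection is metaflat. Applying this equivalence on both $G$ and $TG$ and invoking part~(1) gives~(2). I expect no serious obstacle, since the genuine computational content has already been discharged in Lemma~\ref{metacurv}; the only point requiring care is the passage from vanishing on $\mathfrak{g}^{*}$ to vanishing of the full tensor $\mathcal{M}^{G}$, which is precisely where the left-invariance of $\mathcal{D}^{G}$, $g$ and $\Pi_{G}$ is used.
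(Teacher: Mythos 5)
Your argument follows the paper's proof essentially verbatim: both assemble the componentwise formula $\mathcal{M}^{TG}((\alpha,\beta),(\alpha^{'},\beta^{'}),(\alpha^{''},\beta^{''})) = \big(\mathcal{M}^{G}(\alpha,\beta^{'},\beta^{''}) + \mathcal{M}^{G}(\beta,\alpha^{'},\beta^{''}) + \mathcal{M}^{G}(\beta,\beta^{'},\alpha^{''}), \mathcal{M}^{G}(\beta,\beta^{'},\beta^{''})\big)$ from Lemma~\ref{metacurv} by multilinearity, read off the forward implication, and extract the converse from the pure vertical component $(0,\mathcal{M}^{G}(\beta,\beta^{'},\beta^{''}))$. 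The one point you elide in part~(2) is that being a generalized Poisson bracket also presupposes flatness of the connection (so that Hawkins' pre-Poisson bracket is defined and well behaved in the first place); the paper closes this by additionally invoking the flatness equivalence of Theorem~\ref{th1}, which you should cite alongside the metaflatness equivalence of part~(1).
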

      \begin{proof}
      \begin{enumerate}
    \item According to lemma (\ref{metacurv}) for any $(\alpha,\beta),(\alpha^{'},\beta^{'}),(\alpha^{''},\beta^{''}) \in \mathfrak{g}^{*} \times \mathfrak{g}^{*}$ we obtain:
    $$\mathcal{M}^{TG}((\alpha,\beta),(\alpha^{'},\beta^{'}),(\alpha^{''},\beta^{''})) = \Big(\mathcal{M}^{G}(\alpha,\beta^{'},\beta^{''}) + \mathcal{M}^{G}(\beta,\alpha^{'},\beta^{''}) + \mathcal{M}^{G}(\beta,\beta^{'},\alpha^{''}), \mathcal{M}^{G}(\beta,\beta^{'},\beta^{''}) \Big).$$ 
    So if $\mathcal{M}^{G} = 0,$ then $\mathcal{M}^{TG} = (0,0).$ We now assume that $\mathcal{M}^{TG} = (0,0)$ then for any $\beta,\beta^{'},\beta^{''} \in \mathfrak{g}^{*}$ wa have $$\mathcal{M}^{G}(\beta,\beta^{'})\beta^{''} = 0.$$
      Therefore $\mathcal{D}^{G}$ is metaflat.
      \item From theorem (\ref{th1}), the Levi-Civita contravariant connection $\mathcal{D}^{G}$ is falt if, and only if, 
      $\mathcal{D}^{TG}$ is falt. Moreover $\mathcal{D}^{G}$ is metafalt if, and only if, 
      $\mathcal{D}^{TG}$ is metafalt. Hence we deduce that $\{ , \}_{G}$ is a generalized Poisson bracket on $\Omega^{*}(M)$ if, and only if, 
       $\{ , \}_{TG}$ is a generalized Poisson bracket on $\Omega^{*}(TG)$.
        \end{enumerate}
      \end{proof}

     \section{Pseudo-Riemannian Sanchez de Alvarez tangent Poisson-Lie group}
     Note that in \cite{al-za} the author and N.Zaalani showed that the Sanchez de Alvarez tangent Poisson-Lie group $(TG,\Pi_{TG})$ of $G$ equipped with the natural left invariant Riemannian metric (or the left invariant Sasaki metric or the left invariant Cheeger-Gromoll metric) is a Riemannian Poisson-Lie group
     if, and only if, $(G,\Pi_{G})$ is a trivial Poisson-Lie group ($\Pi_{G} = 0$). Moreover, the semi-direct product Lie algebra $(\mathfrak{g} \rtimes \mathfrak{g}, [ , ]_{\mathfrak{g} \rtimes \mathfrak{g}})$ equipped with the natural scalar product is a Riemannian Lie algebra if, and only if, $(\mathfrak{g},[ , ]_{\mathfrak{g}})$ is an abelian Lie algebra.\\
     In this section we study the compatibility in the sense of M.Boucetta between the Sanchez de Alvarez Poisson-Lie structure $\Pi_{TG}$ structure and the complete pseudo-Riemannian metric $g^{c}$ on $TG$.\\
    
 Let $a$ be a bilinear, symmetric and non-degenerate form on $\mathfrak{g}^{*}$ and let $a^{c}$ be the complete lift of $a$ given for any $(\alpha,\beta),(\alpha^{'},\beta^{'}) \in \mathfrak{g}^{*} \times \mathfrak{g}^{*}$ by:
  \begin{equation*} 
\begin{array}{rcl}
 a^{c}((\alpha,0),(\alpha^{'},0))  &=& 0,\\
a^{c}((\alpha,0), (0,\beta^{'})) &=& (a(\alpha, \beta^{'}))^{v} \\
a^{c}((0,\beta), (0,\beta^{'})) &=& (a(\beta, \beta^{'}))^{c}.
\end{array}
\end{equation*}     
 Let $g$ be the left invariant contravariant pseudo-Riemannian metric associated to $a$ and let $g^{c}$ be the complete  left invariant contravariant pseudo-Riemannian metric associated to $a^{c}$.\\~~~~\\ 
 Let $(G,\Pi_{G})$ be a Poisson-Lie group with Lie bialgebra $(\mathfrak{g},\mathfrak{g}^{*})$. The linearized Poisson structure of $\Pi_{G}$ at $e$ is the linear Poisson structure $\Pi_{\mathfrak{g}}$ on $\mathfrak{g}$, whose value at $X \in \mathfrak{g}$ is given by: $$\Pi_{\mathfrak{g}}(X) = d_{e}\Pi_{G}(X).$$ The linear Poisson structure $\Pi_{\mathfrak{g}}$ on  $\mathfrak{g} = T_{e}G$, making $(\mathfrak{g},\Pi_{\mathfrak{g}})$ an abelian Poisson-Lie group with Lie bialgebra $(\mathfrak{g},\mathfrak{g}^{*})$ such that the  Lie bracket of $\mathfrak{g}$ is zero and the Lie bracket of $\mathfrak{g}^{*}$ is $[ , ]_{\mathfrak{g}^{*}}$. 
 \begin{rmk} \label{r1}
If $(G,\Pi_{G},g)$ is a pseudo-Riemannian Poisson Lie group then,  its dual Lie algebra $(\mathfrak{g}^{*},[ , ]_{\mathfrak{g}^{*}},a)$ equipped with the form $a$ is a pseudo-Riemannian Lie algebra and the abelian Poisson-Lie group $(\mathfrak{g},\Lambda_{\mathfrak{g}}, \tilde{a})$ equipped with the form $\tilde{a}$ associated to $a$ is a pseudo-Riemannian Poisson-Lie group \cite{Bouc}. 
\end{rmk}
 \begin{thm}
 Let $(G,\Pi_{G},g)$ be a Poisson-Lie group equipped with the left invariant contravariant pseudo-Riemannian metric and $(TG,\Pi_{TG},g^{c})$ the Sanchez de Alvarez tangent Poisson-Lie group of $G$ equipped with the complete left invariant pseudo-Riemannian  metric $g^{c}$.
 Then  $(G,\Pi_{G},g)$ is a pseudo-Riemannian Poisson-Lie group if, and only if,  $(TG,\Pi_{TG},g^{c})$ is a pseudo-Riemannian Poisson-Lie group. 
 \end{thm}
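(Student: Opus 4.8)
The plan is to trade the group-level statement for a purely Lie-algebraic one about the duals. By the characterization of Boucetta \cite{Bouc}, a Poisson-Lie group carrying a compatible left invariant contravariant pseudo-Riemannian metric is a pseudo-Riemannian Poisson-Lie group if, and only if, its dual Lie algebra equipped with the underlying symmetric form is a pseudo-Riemannian Lie algebra in the sense of (\ref{Riemannian Lie}); in other words, the pointwise compatibility (\ref{Ri}) at every $x$ is equivalent to the single infinitesimal identity (\ref{Riemannian Lie}). Granting this for both groups, and recalling that the dual of $(G,\Pi_{G},g)$ is $(\mathfrak{g}^{*},[ , ]_{\mathfrak{g}^{*}},a)$ while the dual of the Poisson-Lie group $(TG,\Pi_{TG},g^{c})$ is $(\mathfrak{g}^{*}\ltimes\mathfrak{g}^{*},[ , ]_{\mathfrak{g}^{*}\ltimes\mathfrak{g}^{*}},a^{c})$, the theorem reduces to the equivalence: $(\mathfrak{g}^{*},[ , ]_{\mathfrak{g}^{*}},a)$ is a pseudo-Riemannian Lie algebra if, and only if, $(\mathfrak{g}^{*}\ltimes\mathfrak{g}^{*},[ , ]_{\mathfrak{g}^{*}\ltimes\mathfrak{g}^{*}},a^{c})$ is one.

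To prove this equivalence I would write out the left-hand side of (\ref{Riemannian Lie}) for $\mathfrak{g}^{*}\ltimes\mathfrak{g}^{*}$, namely $[\mathcal{D}^{TG}_{(\alpha,\beta)}(\alpha',\beta'),(\alpha'',\beta'')]_{\mathfrak{g}^{*}\ltimes\mathfrak{g}^{*}}+[(\alpha,\beta),\mathcal{D}^{TG}_{(\alpha'',\beta'')}(\alpha',\beta')]_{\mathfrak{g}^{*}\ltimes\mathfrak{g}^{*}}$, feeding in the semi-direct bracket (\ref{e2}) together with the infinitesimal Levi-Civita connection of $(\mathfrak{g}^{*}\ltimes\mathfrak{g}^{*},a^{c})$. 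The latter is the restriction to $\mathfrak{g}^{*}\times\mathfrak{g}^{*}$ of the connection of Proposition \ref{cnxcont}, so that $\mathcal{D}^{TG}_{(\alpha,\beta)}(\alpha',\beta')=(A_{\alpha}\beta'+A_{\beta}\alpha',A_{\beta}\beta')$, where $A$ is the infinitesimal connection of $(\mathfrak{g}^{*},a)$ defined by (\ref{e1}). Expanding and collecting the two $\mathfrak{g}^{*}$-components of the result is the only computation required.

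The outcome is organized by a simple pairing (all brackets below are $[ , ]_{\mathfrak{g}^{*}}$). The second component is
\begin{equation*}
[A_{\beta}\beta',\beta'']+[\beta,A_{\beta''}\beta'],
\end{equation*}
which is exactly the left-hand side of (\ref{Riemannian Lie}) for $(\mathfrak{g}^{*},a)$; hence its identical vanishing is equivalent to $(\mathfrak{g}^{*},a)$ being pseudo-Riemannian, and this already gives the implication $(TG)\Rightarrow(G)$. The first component is a sum of six brackets which regroup as
\begin{equation*}
\big([A_{\alpha}\beta',\beta'']+[\alpha,A_{\beta''}\beta']\big)+\big([A_{\beta}\alpha',\beta'']+[\beta,A_{\beta''}\alpha']\big)+\big([A_{\beta}\beta',\alpha'']+[\beta,A_{\alpha''}\beta']\big),
\end{equation*}
each parenthesis being an instance of (\ref{Riemannian Lie}) for $(\mathfrak{g}^{*},a)$ applied to the triples $(\alpha,\beta',\beta'')$, $(\beta,\alpha',\beta'')$ and $(\beta,\beta',\alpha'')$ respectively. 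Thus if $(\mathfrak{g}^{*},a)$ is pseudo-Riemannian both components vanish for all arguments, yielding $(G)\Rightarrow(TG)$, and the equivalence is established.

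The step I expect to be delicate is the reduction in the first paragraph: one must make sure that the full equivalence between the pointwise condition (\ref{Ri}), which carries the operators $Ad^{*}_{x}$ and the values $\Pi^{l}_{G}(x)$, and the infinitesimal identity (\ref{Riemannian Lie}) is legitimately available for $TG$, using that $g^{c}$ is precisely the left invariant contravariant metric attached to $a^{c}$ and that $(TG,\Pi_{TG})$ is a Poisson-Lie group (both recorded in the excerpt). Should one wish to bypass that equivalence, the alternative is to check (\ref{Ri}) on $TG$ directly; this forces one to express the coadjoint action $Ad^{*}_{(x,X)}$ and the linearized tensor $\Pi^{l}_{TG}(x,X)$ in terms of $Ad^{*}_{x}$ and $\Pi^{l}_{G}(x)$, a heavier bookkeeping task that the same two-component splitting nonetheless keeps under control.
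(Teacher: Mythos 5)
Your reduction in the first paragraph rests on an equivalence that is not available. What Boucetta's result gives (and what the paper records around (\ref{Riemannian Lie})) is only the \emph{necessary} condition: if $(G,\Pi_{G},g)$ is a pseudo-Riemannian Poisson-Lie group, then $(\mathfrak{g}^{*},[\,,\,]_{\mathfrak{g}^{*}},a)$ is a pseudo-Riemannian Lie algebra. This is just (\ref{Ri}) evaluated at $x=e$, where $Ad^{*}_{e}=\mathrm{id}$ and $\Pi^{l}_{G}(e)=0$. For general $x$ the compatibility condition (\ref{Ri}) carries the extra operators $ad^{*}_{\Pi^{l}_{G}(x)(\alpha)}$ and the conjugation by $Ad^{*}_{x}$, and these do not follow from the single infinitesimal identity. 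So the claim that the group-level condition on $TG$ is equivalent to $(\mathfrak{g}^{*}\ltimes\mathfrak{g}^{*},a^{c})$ being a pseudo-Riemannian Lie algebra is unjustified, and with it the whole argument. Your Lie-algebra computation itself is correct, and it does establish that $(\mathfrak{g}^{*},a)$ is a pseudo-Riemannian Lie algebra if and only if $(\mathfrak{g}^{*}\ltimes\mathfrak{g}^{*},a^{c})$ is (this is the analogue of the paper's Proposition on $(\mathfrak{g}\rtimes\mathfrak{g},\tilde{a}^{c})$), but that is a strictly weaker statement than the theorem.

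The route you relegate to a fallback is the one the paper actually takes, and it is where the real content lies. One must compute $\Pi^{l}_{TG}$ and the coadjoint action of $\mathfrak{g}\rtimes\mathfrak{g}$ explicitly; in particular
$$\Pi^{l}_{TG}(x,X)(\alpha,\alpha^{'})=\bigl(\Pi^{l}_{G}(x)(\alpha^{'}),\,\Pi^{l}_{G}(x)(\alpha)+\Pi_{\mathfrak{g}}(X)(\alpha^{'})\bigr),$$
where $\Pi_{\mathfrak{g}}$ is the linearization of $\Pi_{G}$ at $e$. When the full condition (\ref{Ri}) for $TG$ is expanded, the first component contains, besides three copies of the $G$-condition in the triples you identified, the additional terms
$$[\,ad^{*}_{\Pi_{\mathfrak{g}}(X)(\alpha^{'})}\gamma^{'},\beta^{'}]_{\mathfrak{g}^{*}}+[\alpha^{'},\,ad^{*}_{\Pi_{\mathfrak{g}}(X)(\beta^{'})}\gamma^{'}]_{\mathfrak{g}^{*}},$$
coming from the fiber variable $X$. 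Their vanishing is not a formal consequence of your pairing; it requires the separate input (the paper's Remark on the linearization) that the abelian Poisson-Lie group $(\mathfrak{g},\Pi_{\mathfrak{g}},\tilde{a})$ is itself pseudo-Riemannian Poisson-Lie whenever $G$ is. A purely infinitesimal computation at the identity of $TG$ cannot see these terms, which is precisely why the shortcut fails. The converse direction, on the other hand, is obtained exactly as you suggest, by restricting the $TG$-condition to elements of the form $(0,\beta)$ and reading off the second component.
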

 \begin{proof}
 Note that the linear transformation $Ad^{*}_{g} : \mathfrak{g}^{*} \rightarrow \mathfrak{g}^{*}$ is a Lie algebra automorphism.\\ 
 The infinitesimal Levi-Civita connection $B$ associated to $([ , ]_{\mathfrak{g}^{*} \ltimes \mathfrak{g}^{*}}, a^{c})$ is given for any $(\alpha,\alpha^{'}), (\gamma,\gamma^{'}) \in \mathfrak{g}^{*} \times \mathfrak{g}^{*}$ by:
 $$B_{(\alpha,\alpha^{'})}(\gamma,\gamma^{'}) = (A_{\alpha}\gamma^{'} + A_{\alpha^{'}}\gamma,A_{\alpha^{'}}\gamma^{'}),$$
 where $A$ is the infinitesimal Levi-Civita connection associated respectively to $([ , ]_{\mathfrak{g}^{*}},a)$.\\
 For any $(X,Y) \in \mathfrak{g} \rtimes \mathfrak{g}$ and $(\gamma,\gamma^{'}) \in \mathfrak{g}^{*} \ltimes \mathfrak{g}^{*},$
 $$ad^{*}_{(X,Y)}(\gamma,\gamma^{'}) = (ad^{*}_{X}\gamma + ad^{*}_{Y}\gamma^{'},ad^{*}_{X}\gamma^{'}).$$
 Let $(x_{i})$ are local coordinates of $G$ in a neighborhood of $e$ and $(x_i,y_i)$ the correspondent local coordinates of  $TG$. The Poisson tensors of $G$ and $TG$ are expressed respectively by \cite{M.N}:
\begin{equation*}
\Pi_{G} =  \sum_{i,j} \Pi_{G}^{ij} \frac{\partial}{\partial_{x_i}} \wedge  \frac{\partial}{\partial_{x_j}},
\end{equation*}
and
\begin{equation} \label{poi}
\Pi_{TG} = \sum_{i,j,k} \Pi_{G}^{ij} \frac{\partial}{\partial_{x_i}} \wedge  \frac{\partial}{\partial_{y_j}} +   y_{k} \frac{\partial \Pi_{G}^{ij}}{\partial_{x_{k}}} \frac{\partial}{\partial_{y_i}} \wedge \frac{\partial}{\partial_{y_j}},
\end{equation}
then for any $(x,X) \in TG$ and for any $(\alpha,\alpha^{'}) \in \mathfrak{g}^{*} \times \mathfrak{g}^{*}$ we have,
 $$\Pi^{l}_{TG}(x,X)(\alpha,\alpha^{'}) = (\Pi^{l}_{G}(x)(\alpha^{'}),\Pi^{l}_{G}(x)(\alpha) + \Pi_{\mathfrak{g}}(X)(\alpha^{'})),$$
 where $\Pi_{\mathfrak{g}}$ is the linear Poisson structure on $\mathfrak{g}$ associated to $\Pi_{G}$.\\ 
 Then for any $(\alpha,\alpha^{'}),(\beta,\beta^{'}),(\gamma,\gamma^{'}) \in \mathfrak{g}^{*} \times \mathfrak{g}^{*}$ we obtain:
 $$\begin{array}{rcl}
 &&[B_{(\alpha,\alpha^{'})}(\gamma,\gamma^{'})+ ad^*_{\Pi^{l}_{TG}(x,X)(\alpha,\alpha^{'})}(\gamma,\gamma^{'}),(\beta,\beta^{'})]_{\mathfrak{g}^* \ltimes \mathfrak{g}^{*}}\\ &\quad \quad & + \quad [(\alpha,\alpha^{'}),B_{(\beta,\beta^{'})}(\gamma,\gamma^{'}) + ad^{*}_{\Pi^{l}_{TG}(x,X)(\beta,\beta^{'})}(\gamma,\gamma^{'})]_{\mathfrak{g}^* \ltimes \mathfrak{g}^{*}} \\ &=&
 [(A_{\alpha}\gamma^{'} + A_{\alpha^{'}} \gamma,A_{\alpha^{'}}\gamma^{'}) + ad^{*}_{(\Pi^{l}_{G}(x)(\alpha^{'}),\Pi^{l}_{G}(x)(\alpha) + \Pi_{\mathfrak{g}}(X)(\alpha^{'}))}(\gamma,\gamma^{'}),(\beta,\beta^{'})]_{\mathfrak{g}^{*} \ltimes \mathfrak{g}^{*}}\\ &\quad \quad & + \quad
 [(\alpha,\alpha^{'}),(A_{\beta}\gamma^{'} + A_{\beta^{'}} \gamma,A_{\beta^{'}}\gamma^{'}) + ad^{*}_{(\Pi^{l}_{G}(x)(\beta^{'}),\Pi^{l}_{G}(x)(\beta) + \Pi_{\mathfrak{g}}(X)(\beta^{'}))}(\gamma,\gamma^{'})]_{\mathfrak{g}^{*} \ltimes \mathfrak{g}^{*}} \\&=&
 [(A_{\alpha}\gamma^{'} + A_{\alpha^{'}} \gamma + ad^{*}_{\Pi^{l}_{G}(x)(\alpha^{'})}\gamma + ad^{*}_{\Pi^{l}_{G}(x)(\alpha)+ \Pi_{\mathfrak{g}}(X)(\alpha^{'})}\gamma^{'},A_{\alpha^{'}}\gamma^{'} + ad^{*}_{\Pi^{l}_{G}(x)(\alpha^{'})}\gamma^{'} ),(\beta,\beta^{'})]_{\mathfrak{g}^{*} \ltimes \mathfrak{g}^{*}}\\
 &\quad \quad & + \quad [(\alpha,\alpha^{'}),(A_{\beta}\gamma^{'} + A_{\beta^{'}} \gamma + ad^{*}_{\Pi^{l}_{G}(x)(\beta^{'})}\gamma + ad^{*}_{\Pi^{l}_{G}(x)(\beta)+ \Pi_{\mathfrak{g}}(X)(\beta^{'})}\gamma^{'}, A_{\beta^{'}}\gamma^{'} + ad^{*}_{\Pi^{l}_{G}(x)(\beta^{'})}\gamma^{'} )]_{\mathfrak{g}^{*} \ltimes \mathfrak{g}^{*}}\\&=& \Big([A_{\alpha}\gamma^{'} + A_{\alpha^{'}}\gamma +  ad^{*}_{\Pi^{l}_{G}(x)(\alpha^{'})}\gamma + ad^{*}_{\Pi^{l}_{G}(x)(\alpha)}\gamma^{'}+ ad^{*}_{\Pi_{\mathfrak{g}}(X)(\alpha^{'})}\gamma^{'},\beta^{'}]_{\mathfrak{g}^{*}}\\ &\quad \quad & + \quad [A_{\alpha^{'}}\gamma^{'} + ad^{*}_{\Pi^{l}_{G}(x)(\alpha^{'})}\gamma^{'},\beta]_{\mathfrak{g}^{*}},
 [A_{\alpha^{'}}\gamma^{'} + ad^{*}_{\Pi^{l}_{G}(x)(\alpha^{'})}\gamma^{'},\beta]_{\mathfrak{g}^{*}} \Big)
 + \Big([\alpha,A_{\beta^{'}}\gamma^{'} + ad^{*}_{\Pi^{l}_{G}(x)(\beta^{'})}\gamma^{'}]_{\mathfrak{g}^{*}}\\ &\quad \quad & + \quad [\alpha,A_{\beta}\gamma^{'} + A_{\beta^{'}}\gamma +  ad^{*}_{\Pi^{l}_{G}(x)(\beta^{'})}\gamma + ad^{*}_{\Pi^{l}_{G}(x)(\beta)}\gamma^{'}+ ad^{*}_{\Pi_{\mathfrak{g}}(X)(\beta^{'})}\gamma^{'}]_{\mathfrak{g}^{*}},[\alpha^{'},A_{\beta^{'}}\gamma^{'} + ad^{*}_{\Pi^{l}_{G}(x)(\beta^{'})}\gamma^{'} \Big) \\ &=& \Big([A_{\alpha}\gamma^{'} + ad^{*}_{\Pi^{l}_{G}(x)(\alpha)}\gamma^{'},\beta^{'}]_{\mathfrak{g}^{*}} + [\alpha,A_{\beta^{'}}\gamma^{'} + ad^{*}_{\Pi^{l}_{G}(x)(\beta^{'})}\gamma^{'}]_{\mathfrak{g}^{*}} + [A_{\alpha^{'}}\gamma + ad^{*}_{\Pi^{l}_{G}(x)(\alpha^{'})}\gamma,\beta^{'}]_{\mathfrak{g}^{*}}\\ &\quad \quad & + \quad [\alpha^{'},A_{\beta^{'}}\gamma + ad^{*}_{\Pi^{l}_{G}(x)(\beta^{'})}\gamma]_{\mathfrak{g}^{*}}  + [A_{\alpha^{'}}\gamma^{'} + ad^{*}_{\Pi^{l}_{G}(x)(\alpha^{'})}\gamma^{'},\beta]_{\mathfrak{g}^{*}} + [\alpha^{'},A_{\beta}\gamma^{'} + ad^{*}_{\Pi^{l}_{G}(x)(\beta)}\gamma^{'}]_{\mathfrak{g}^{*}}\\ &\quad \quad & + \quad [ ad^{*}_{\Pi_{\mathfrak{g}}(X)(\alpha^{'})}\gamma^{'},\beta^{'}]_{\mathfrak{g}^{*}} + [\alpha^{'}, ad^{*}_{\Pi_{\mathfrak{g}}(X)(\beta^{'})}\gamma^{'}]_{\mathfrak{g}^{*}}, [A_{\alpha^{'}}\gamma^{'} + ad^{*}_{\Pi^{l}_{G}(x)(\alpha^{'})}\gamma^{'},\beta^{'}]_{\mathfrak{g}^{*}}\\ &\quad \quad & + \quad [\alpha^{'},A_{\beta^{'}}\gamma^{'} + ad^{*}_{\Pi^{l}_{G}(x)(\beta^{'})}\gamma^{'}]_{\mathfrak{g}^{*}}\Big).
 \end{array}$$
 Then from remark (\ref{r1}), if $(G,\Pi_{G},g)$ is a pseudo-Riemannian Poisson-Lie group then  $(TG,\Pi_{TG},g^{c})$ is a pseudo-Riemannian Poisson-Lie group.  Conversely, if  $(TG,\Pi_{TG},g^{c})$ is a pseudo-Riemannian Poisson-Lie group then for any $x \in G$ and for any $\alpha^{'}, \beta^{'}, \gamma^{'} \in \mathfrak{g}^{*}$ we have 
 $$ [A_{\alpha^{'}}\gamma^{'} + ad^{*}_{\Pi^{l}_{G}(x)(\alpha^{'})}\gamma^{'},\beta^{'}]_{\mathfrak{g}^{*}} +  [\alpha^{'},A_{\beta^{'}}\gamma^{'} + ad^{*}_{\Pi^{l}_{G}(x)(\beta^{'})}\gamma^{'}]_{\mathfrak{g}^{*}} = 0,$$
 therefore $(G,\Pi_{G},g)$ is a pseudo-Riemannian Poisson-Lie group.
 \end{proof}
 \begin{prop}
Let  $(\mathfrak{g},[ , ]_{\mathfrak{g}},\tilde{a})$ be a Lie algebra equipped with  a bilinear, symmetric and non-degenerate form $\tilde{a}$ and let $(\mathfrak{g} \rtimes \mathfrak{g}, [ , ]_{\mathfrak{g} \rtimes \mathfrak{g}},\tilde{a}^{c})$ be  the semi-direct product Lie algebra equipped with the complete form $\tilde{a}^{c}.$ Then $(\mathfrak{g},[ , ]_{\mathfrak{g}},\tilde{a})$ is a pseudo-Riemannian Lie algebra if, and only if,  $(\mathfrak{g} \rtimes \mathfrak{g}, [ , ]_{\mathfrak{g} \rtimes \mathfrak{g}},\tilde{a}^{c})$ is a  pseudo-Riemannian Lie algebra.
\end{prop}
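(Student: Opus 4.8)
The plan is to reduce everything to the infinitesimal Levi-Civita connection $A$ of $(\mathfrak{g},[\,,\,]_{\mathfrak{g}},\tilde{a})$, defined as in (\ref{e1}) by
$$2\tilde{a}(A_X Y,Z)=\tilde{a}([X,Y]_{\mathfrak{g}},Z)+\tilde{a}([Z,X]_{\mathfrak{g}},Y)+\tilde{a}([Z,Y]_{\mathfrak{g}},X),$$
so that, by (\ref{Riemannian Lie}), $(\mathfrak{g},[\,,\,]_{\mathfrak{g}},\tilde{a})$ is pseudo-Riemannian exactly when $[A_X Y,Z]_{\mathfrak{g}}+[X,A_Z Y]_{\mathfrak{g}}=0$ for all $X,Y,Z\in\mathfrak{g}$. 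First I would make $\tilde{a}^c$ explicit on $\mathfrak{g}\rtimes\mathfrak{g}$. From the defining relations of the complete lift, $\tilde{a}^c((X,0),(X',0))=(\tilde{a}(X,X'))^{c}$; but at the Lie-algebra level $\tilde{a}(X,X')$ is a constant function, so its complete part vanishes, while $\tilde{a}^c((X,0),(0,Y'))=\tilde{a}(X,Y')$ and $\tilde{a}^c((0,Y),(0,Y'))=0$. Bilinearity then yields $\tilde{a}^c((X,Y),(X',Y'))=\tilde{a}(X,Y')+\tilde{a}(Y,X')$, which is readily seen to be symmetric and non-degenerate.

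The key step is to compute the Levi-Civita connection $B$ of $(\mathfrak{g}\rtimes\mathfrak{g},[\,,\,]_{\mathfrak{g}\rtimes\mathfrak{g}},\tilde{a}^c)$ in terms of $A$, in the spirit of Proposition \ref{cnxcont}. Feeding the semi-direct bracket and the explicit $\tilde{a}^c$ into the Koszul formula (\ref{e1}) and matching the coefficients of $X''$ and $Y''$ should give
$$B_{(X,Y)}(X',Y')=\big(A_X X',\,A_X Y'+A_Y X'\big),$$
the pure term $A_X X'$ sitting in the first slot so as to match the pure first slot $[X,X']_{\mathfrak{g}}$ of the semi-direct bracket. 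I would verify this directly: substituting $(A_X X',\,A_X Y'+A_Y X')$ into $2\tilde{a}^c(B_{(X,Y)}(X',Y'),(X'',Y''))$ produces nine terms $\tilde{a}([\cdot,\cdot]_{\mathfrak{g}},\cdot)$ that coincide, one by one, with the nine terms of the Koszul right-hand side for $B$ once each $2\tilde{a}(A_\bullet\bullet,\bullet)$ is expanded.

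With $B$ in hand, the compatibility (\ref{Riemannian Lie}) for $\mathfrak{g}\rtimes\mathfrak{g}$ reads
$$[B_{(X,Y)}(X',Y'),(X'',Y'')]_{\mathfrak{g}\rtimes\mathfrak{g}}+[(X,Y),B_{(X'',Y'')}(X',Y')]_{\mathfrak{g}\rtimes\mathfrak{g}}=0,$$
and I would expand both brackets with the semi-direct rule. The first component collapses to $[A_X X',X'']_{\mathfrak{g}}+[X,A_{X''}X']_{\mathfrak{g}}$, which is precisely (\ref{Riemannian Lie}) for $\mathfrak{g}$ evaluated at $(X,X',X'')$; the second component splits into the three packets $[A_X Y',X'']_{\mathfrak{g}}+[X,A_{X''}Y']_{\mathfrak{g}}$, $[A_X X',Y'']_{\mathfrak{g}}+[X,A_{Y''}X']_{\mathfrak{g}}$ and $[A_Y X',X'']_{\mathfrak{g}}+[Y,A_{X''}X']_{\mathfrak{g}}$, each again an instance of (\ref{Riemannian Lie}) for $\mathfrak{g}$. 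Hence, if $(\mathfrak{g},[\,,\,]_{\mathfrak{g}},\tilde{a})$ is pseudo-Riemannian, all four packets vanish and $(\mathfrak{g}\rtimes\mathfrak{g},[\,,\,]_{\mathfrak{g}\rtimes\mathfrak{g}},\tilde{a}^c)$ is pseudo-Riemannian. Conversely, since an element of $\mathfrak{g}\rtimes\mathfrak{g}=\mathfrak{g}\oplus\mathfrak{g}$ vanishes componentwise, the first component alone forces $[A_X X',X'']_{\mathfrak{g}}+[X,A_{X''}X']_{\mathfrak{g}}=0$ for all $X,X',X''$, which is exactly the pseudo-Riemannian condition for $\mathfrak{g}$.

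I expect the only genuine obstacle to be the derivation of the connection formula for $B$: one must correctly identify $\tilde{a}^c$ (in particular that its complete part degenerates at the Lie-algebra level, so that the pure slot lands in the first factor, \emph{opposite} to the $\ltimes$-case of the preceding theorem) and then carry out the Koszul bookkeeping without sign or slot errors. Once $B$ is pinned down, the equivalence is a transparent componentwise reorganization into copies of (\ref{Riemannian Lie}) for $\mathfrak{g}$, with the backward implication falling out for free from the first component.
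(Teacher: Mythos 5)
Your proposal is correct and follows essentially the same route as the paper: identify the infinitesimal Levi-Civita connection of $(\mathfrak{g}\rtimes\mathfrak{g},\tilde{a}^{c})$ as $B_{(X,Y)}(X',Y')=(A_{X}X',A_{X}Y'+A_{Y}X')$ and expand the compatibility condition componentwise into packets, each an instance of (\ref{Riemannian Lie}) for $\mathfrak{g}$, with the converse read off from the first component. The paper merely states the formula for $B$ and the expanded bracket sum without the Koszul verification or the explicit form of $\tilde{a}^{c}$, both of which you supply correctly.
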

\begin{proof}
The infinitesimal Levi-Civita connection $B$ associated to $([ , ]_{\mathfrak{g} \rtimes \mathfrak{g}},\tilde{a}^{c})$ is given for any $(X,Y),(X^{'},Y^{'}) \in \mathfrak{g} \rtimes \mathfrak{g},$
\begin{equation*}
B_{(X,Y)}(X^{'},Y^{'}) = (A_{X}X^{'},A_{X}Y^{'} + A_{Y}X^{'}),
\end{equation*}
where $A$ is the infinitesimal Levi-Civita associated to $([ , ]_{\mathfrak{g}},\tilde{a}).$\\
According to equation (\ref{Riemannian Lie}), for any  $(X,Y),(X^{'},Y^{'}), (X^{''},Y^{''}) \in \mathfrak{g} \rtimes \mathfrak{g},$ we obtain :
$$\begin{array}{rcl}
&&[B_{(X,Y)}(X^{'},Y^{'}), (X^{''},Y^{''})]_{\mathfrak{g} \rtimes \mathfrak{g}} + [(X,Y),B_{(X^{''},Y^{''})} (X^{'},Y^{'})]_{\mathfrak{g} \rtimes \mathfrak{g}}\\ &=& \Big([A_{X}X^{'},X^{''}]_{\mathfrak{g}} + [X,A_{X^{''}}X^{'}]_{\mathfrak{g}}, [A_{X}X^{'},Y^{''}]_{\mathfrak{g}} + [X,A_{Y^{''}}X^{'}]_{\mathfrak{g}} + [A_{X}Y^{'},X^{''}]_{\mathfrak{g}} + [X,A_{X^{''}}Y^{'}]_{\mathfrak{g}} \\ &\quad \quad & + \quad  [A_{Y}X^{'},X^{''}]_{\mathfrak{g}} + [Y,A_{X^{''}}X^{'}]_{\mathfrak{g}} \Big).
\end{array}$$
\end{proof}
\section{Examples}
\begin{enumerate}
 \item
Let $(e_{1},e_{2},e_{3})$ be an orthonormal basis of $\mathbb{R}^{3}$. The Lie algebra $\mathbb{R}^{3}$ with bracket,
$$[e_{1},e_{2}]_{\mathbb{R}^{3}} = \lambda e_{3}, \hspace{2mm} [e_{1},e_{3}]_{\mathbb{R}^{3}} = -\lambda e_{2}, \hspace{2mm} [e_{2},e_{3}]_{\mathbb{R}^{3}} = 0, \hspace{2mm} \lambda< 0,$$
is a Riemannian Lie algebra \cite{Bouc}. The infinitesimal situation can be integrated and we obtain that the 
 triplet $(\mathbb{R}^{3}, \Pi_{\mathbb{R}^{3}}, \langle , \rangle_{\mathbb{R}^{3}})$ is a Riemannian Poisson Lie group, where $\mathbb{R}^{3}$ is equipped with its abelian Lie group structure, $\langle , \rangle_{\mathbb{R}^{3}}$ its canonical Euclidian metric and\\
$$\Pi_{\mathbb{R}^{3}} = \lambda \frac {\partial } {\partial_{x}} \wedge
(z \frac {\partial } {\partial_{y}} - y \frac {\partial } {\partial_{z}}), \hspace{2mm}\lambda < 0.$$

Now let $(x_{i})$ are local coordinates of $G$ in a neighborhood of $e$ and $(x_i,y_i)$ the correspondent local coordinates of  $TG$.
The Riemannian metrics on $G$ and $TG$ is expressed respectively by \cite{ML-PR}:

$$ g = \sum_{i,j} g^{ij} dx_{i} \otimes dx_{j}$$

and 
\begin{equation} \label{met}
g^{c} =  \sum_{i,j,k} y_{k} \frac{\partial g^{ij}}{\partial_{x^{k}}} dx_{i} \otimes dx_{j} + g^{ij} dx_{i} \otimes dy_{j} + g^{ij} dy_{i} \otimes dx_{j}
\end{equation}
 Then using equations (\ref{poi}) and (\ref{met}), the 6-dimensional Poisson-Lie group $(T\mathbb{R}^{3} \equiv \mathbb{R}^{6},\Pi_{\mathbb{R}^{6}}, \langle , \rangle_{\mathbb{R}^{6}})$, where $\mathbb{R}^{6}$ is equipped with its abelian Lie group structure with coordinate $(x, y, z, u, v,w)$,
 $$\Pi_{\mathbb{R}^{6}} = \lambda \frac {\partial } {\partial_{x}} \wedge
(z \frac {\partial } {\partial_{v}} - y \frac {\partial } {\partial_{w}}) +  \lambda \frac {\partial } {\partial_{u}} \wedge
(w \frac {\partial } {\partial_{v}} - v \frac {\partial } {\partial_{w}})$$
and
$$\langle , \rangle_{\mathbb{R}^{6}} = dxdu + dydv + dzdw + dudx + dvdy + dwdz$$
is a pseudo-Riemannian Poisson-Lie group.
\item The 3-dimensional Heisenberg Lie algebra $$  \mathfrak{h}_{3} = \left \lbrace \begin{array}{rcl} \left  ( \begin{array}{rcl} 0 & x & z  \\ 0 & 0 & y \\ 0 & 0 & 0  \end{array}
   \right ),(x,y,z) \in \mathbb{R}^{3}
   \end{array}
   \right\rbrace,$$
   with bracket $[e_{1},e_{2}]_{\mathfrak{h}} = e_{3}$ and $[e_{1},e_{3}]_{\mathfrak{h}} = [e_{2},e_{3}]_{\mathfrak{h}} = 0$ is a pseudo-Riemannian Lie algebra \cite{Bouc}. Then the semi-direct product Lie algebra $\mathfrak{h}_{3} \rtimes \mathfrak{h}_{3}$ of dimension $6$ given by,
   $$ \mathfrak{h}_{3} \rtimes \mathfrak{h}_{3} = \left \lbrace \Big (\begin{array}{rcl}  \left  ( \begin{array}{rcl} 0 & x & z  \\ 0 & 0 & y \\ 0 & 0 & 0  \end{array}
   \right ) , \left  ( \begin{array}{rcl} 0 & u & w  \\ 0 & 0 & v \\ 0 & 0 & 0  \end{array}
   \right )\Big),(x,y,z,u,v,w) \in \mathbb{R}^{6}
   \end{array}
   \right\rbrace,$$
   with bracket 
   $$[(e_{i},e^{'}_{j}),(e_{k},e^{'}_{l})]_{\mathfrak{h}_{3} \rtimes \mathfrak{h}_{3}} = ([e_{i},e_{k}]_{\mathfrak{h}},[e_{i},e^{'}_{l}]_{\mathfrak{h}} + [e^{'}_{j},e_{k}]_{\mathfrak{h}}), \hspace{3mm}  1\leqslant i,j,k,l \leqslant 3.$$
   is a pseudo-Riemannian Lie algebra, where  $$[e_{1},e^{'}_{2}]_{\mathfrak{h}} = e^{'}_{3}, [e_{2},e^{'}_{1}]_{\mathfrak{h}} = -e^{'}_{3}, [e_{1},e^{'}_{3}]_{\mathfrak{h}} = [e_{2},e^{'}_{3}]_{\mathfrak{h}} = [e_{3},e^{'}_{1}]_{\mathfrak{h}} = [e_{3},e^{'}_{1}]_{\mathfrak{h}} = 0.$$
\end{enumerate}
\textbf{Acknowledgement}. I would like to thank Eli Hawkins and Nadhem Zaalani for
their fruitful and helpful discussions.

\end{document}